\newtheorem{theorem}{Theorem}
\newtheorem{lemma}[theorem]{Lemma}
\newtheorem{proposition}[theorem]{Proposition}
\newtheorem{observation}[theorem]{Observation}
\theoremstyle{definition}
\newtheorem{example}[theorem]{Example}
\def\paren#1{\left( #1 \right)}
\def\acc#1{\left\{ #1 \right\}}
\def\floor#1{\left\lfloor #1 \right\rfloor}
\renewcommand{\le}{\leqslant}
\renewcommand{\ge}{\geqslant}
\newcommand{\normaltt}[1]{\normalfont\texttt{#1}}
\newcommand{\LL}{\mathcal L}
\DeclareMathOperator{\RT}{RT}
\author{L\!'ubom{\'i}ra Dvo\v{r}\'akov\'a\affiliationmark{1}
\and Lucas Mol\affiliationmark{2}\thanks{Research
of Lucas Mol is supported by NSERC grant RGPIN-2021-04084.}
\and Pascal Ochem\affiliationmark{3}\thanks{Research
of Pascal Ochem is supported by the ANR project CADO (ANR-24-CE48-3758-01).}
}
\title{Critical exponent of ternary words{ } \\ with few distinct palindromes}
\affiliation{
  FNSPE Czech Technical University, Prague, Czech Republic \\
  Department of Mathematics and Statistics, Thompson Rivers University, Kamloops, Canada \\
  LIRMM, CNRS, Universit\'e de Montpellier, Montpellier, France 
  }
\keywords{palindromes, repetitions, critical exponent, repetition threshold}
\begin{document}
\publicationdata{vol. 28:2}{2026}{22}{10.46298/dmtcs.16125}{2025-07-29; 2025-07-29; 2026-03-13}{2026-03-19}
\maketitle

\begin{abstract}
We study infinite ternary words that contain few distinct palindromes.
In particular, we classify such words according to their critical exponents.
\end{abstract}

\renewcommand{\thefootnote}{\arabic{footnote}}
\setcounter{footnote}{0}

\section{Introduction}\label{sec:intro}

Repetitions have been a central theme in combinatorics on words since the pioneering work of~\cite{Thue06,Thue:1912}.  Here we study fractional repetitions of the form introduced by~\cite{Dejean1972}.  Roughly speaking, for a rational number $r>1$, a word $w$ is said to be an \emph{$r$-power} if it consists of a nonempty word $x$ repeated $r$ times.  For example, the English word $\texttt{alfalfa}$ is a $\frac{7}{3}$ power, and the French word $\texttt{chercher}$ is a $2$-power (also called a square).  For a real number $\beta\geq 1$, a word is said to be \emph{$\beta$-free} (resp.~\emph{$\beta^+$-free}) if it contains no $r$-powers with $r\geq \beta$ (resp.~$r>\beta$).  The \emph{critical exponent} of a word $w$ is the infimum of all $\beta$ such that $w$ is $\beta^+$-free.  For example, the word $\texttt{banana}$ has critical exponent $\tfrac{5}{2}$, since it is $\tfrac{5}{2}^+$-free, but contains the $\tfrac{5}{2}$-power $\texttt{anana}$.

\cite{Thue06} demonstrated the existence of an infinite square-free (i.e., $2$-free) ternary word, and \cite{Dejean1972} strengthened this result by showing that there is an infinite ternary word with critical exponent $\tfrac{7}{4}$, and that this is the minimum critical exponent among all infinite ternary words.  Roughly speaking, this says that the number $\tfrac{7}{4}$ represents the boundary between avoidable and unavoidable repetitions in infinite ternary words.

In general, the \emph{repetition threshold} for $k$ letters, denoted $\RT(k)$, is defined by
\[
\RT(k)=\inf\{\beta\in \mathbb{R}\colon\ \text{there is an infinite word over $k$ letters with critical exponent $\beta$}\}.
\]
In addition to establishing that $\RT(3)=\tfrac{7}{4}$, \cite{Dejean1972} conjectured that 
\[
\RT(k)=\begin{cases}
    \tfrac{7}{4}, &\text{ if $k=3$;}\\
    \tfrac{7}{5}, &\text{ if $k=4$;}\\
    \frac{k}{k-1}, &\text{ if $k=2$ or $k\geq 5$}.
\end{cases}
\]
This conjecture has since been proven through the work of many authors, and is now known as Dejean's theorem.  In particular, \cite{Carpi2007} proved all but finitely many cases, and the last cases were proven independently by~\cite{CurrieRampersad2011} and~\cite{Rao11}.

Especially since the completion of the proof of Dejean's theorem, much work has been done on determining the minimum critical exponent (i.e., the repetition threshold) among all infinite words in various restricted classes of words, including episturmian words (\cite{DvorakovaPelantova2024}), balanced words (\cite{DDP21,DvorakovaPelantovaOpocenskaShur2022,RampersadShallitVandomme2019}), rich words (\cite{CurrieMolPeltomaki2025,CurrieMolRampersad2020}), and words with given factor complexity (\cite{Currie2025,OllingerShallit2025,ShallitShur2019}).

Here we are concerned with determining the minimum critical exponent among all infinite words containing at most a fixed number of distinct palindromes.\footnote{For brevity, going forward every mention of a number of palindromes will refer to a number of distinct palindromes, including the empty word.}  In other words, we study the trade-off between the number of palindromes and the critical exponent in infinite words.
This trade-off has recently been investigated for binary words by~\cite{DOO2023}.
Removing the constraint on the alphabet size does not give anything new:
\begin{itemize}
\item The only infinite word containing at most $4$ palindromes is $(\texttt{012})^\omega$, which has infinite critical exponent.
\item For $k\geq 5$, it follows from Dejean's theorem that there is an infinite word (over $k-1$ letters) containing only $k$ palindromes with critical exponent $\RT(k-1)$, and this is best possible.
\end{itemize}
So in this paper, we consider this trade-off in infinite ternary words.
Our results completely answer questions of the following form:
Do infinite $\beta^+$-free ternary words with at most $p$ palindromes exist?
In every case that such words exist, we also determine whether there are exponentially or polynomially many such words.
The results are summarized in~\Cref{tab1}.
A white cell means that no such infinite word exists. Extremal white cells are labelled
with the corresponding item of Theorem~\ref{thm:neg}.
A green (resp.~red) cell means that there are exponentially (resp.~polynomially) many such words.
We have labelled the cells that correspond to an item of Theorem~\ref{thm:exp_pairs},~\ref{overlap}, or~\ref{41_22}.

\begin{table}[!htb]
\centerline{
\begin{tabular}{|l|l|l|l|l|l|l|l|l|}
\hline
18 & \cellcolor{green}\ref{thm:exp_pairs}\ref{e18} & \cellcolor{green} & \cellcolor{green} & \cellcolor{green} & \cellcolor{green} & \cellcolor{green} & \cellcolor{green} & \cellcolor{green}\\
\hline
17 & \ref{thm:neg}\ref{r16} & \cellcolor{red!50} & \cellcolor{green}\ref{thm:exp_pairs}\ref{e17} & \cellcolor{green} & \cellcolor{green} & \cellcolor{green} & \cellcolor{green} & \cellcolor{green}\\
\hline
16 &  & \cellcolor{red!50}\ref{41_22} &  \cellcolor{red!50} & \cellcolor{green}\ref{thm:exp_pairs}\ref{e16} & \cellcolor{green} & \cellcolor{green} & \cellcolor{green} & \cellcolor{green}\\
\hline
15 &  &  &  & \ref{thm:neg}\ref{r6} & \cellcolor{green} & \cellcolor{green} & \cellcolor{green} & \cellcolor{green}\\
\hline
14 &  &  &  &  & \cellcolor{green} & \cellcolor{green} & \cellcolor{green} & \cellcolor{green}\\
\hline
13 &  &  &  &  & \cellcolor{green} & \cellcolor{green} & \cellcolor{green} & \cellcolor{green}\\
\hline
12 &  &  &  &  & \cellcolor{green} & \cellcolor{green} & \cellcolor{green} & \cellcolor{green}\\
\hline
11 &  &  &  &  & \cellcolor{green} & \cellcolor{green} & \cellcolor{green} & \cellcolor{green}\\
\hline
10 &  &  &  &  & \cellcolor{green} & \cellcolor{green} & \cellcolor{green} & \cellcolor{green} \\
\hline
9 &  &  &  &  & \cellcolor{green} & \cellcolor{green} & \cellcolor{green} & \cellcolor{green} \\
\hline
8 &  &  &  &  & \cellcolor{green} & \cellcolor{green} &  \cellcolor{green} & \cellcolor{green} \\
\hline
7 &  &  &  &  & \cellcolor{green}\ref{thm:exp_pairs}\ref{e7} & \cellcolor{green} & \cellcolor{green} & \cellcolor{green} \\
\hline
6 &  &  &  &  & \cellcolor{red!50}\ref{overlap} & \cellcolor{green}\ref{thm:exp_pairs}\ref{e6} & \cellcolor{green} & \cellcolor{green} \\
\hline
5 &  &  &  &  &  & \ref{thm:neg}\ref{r5} & \cellcolor{green}\ref{thm:exp_pairs}\ref{e5} & \cellcolor{green} \\
\hline
4 &  &  &  &  &  &  & \ref{thm:neg}\ref{r4} & \cellcolor{red!50}$(\texttt{012})^\omega$ \\
\hline
\slashbox{$p$}{$\beta^+$} & $\tfrac74^+$ & $\tfrac{41}{22}^+$ & $\tfrac{25}{13}^+$ & $\tfrac{52}{27}^+$ & $2^+$ & $\tfrac94^+$ & $\tfrac{10}3^+$ & $\infty$ \\
\hline
\end{tabular}}
\caption{Infinite $\beta^+$-free ternary words with at most $p$ palindromes.}\label{tab1}
\end{table}

The remainder of the paper is organized as follows.  In Section~\ref{sec:pre}, we give the necessary background and definitions.  In Section~\ref{sec:neg}, we prove Theorem~\ref{thm:neg}, which establishes the negative results corresponding to the labelled white cells in~\Cref{tab1}.
In Section~\ref{sec:exp}, we prove Theorem~\ref{thm:exp_pairs}, which establishes the exponential growth in the cases corresponding to the labelled green cells in~\Cref{tab1}.
In Section~\ref{sec:poly1}, we prove Theorem~\ref{overlap}, which shows that there are only polynomially many overlap-free words (i.e., $2^+$-free words) with at most 6 palindromes.
The remaining polynomial cases are covered in Section~\ref{sec:poly2} and Section~\ref{sec:ce}.  In Section~\ref{sec:poly2}, we relate the structure of all infinite $\tfrac{52}{27}$-free words with at most $16$ palindromes and all $\tfrac{25}{13}$-free words with at most $17$ palindromes to one particular infinite word with $16$ palindromes.  The proof that this word has critical exponent $\tfrac{41}{22}$ is long and technical, and is given in Section~\ref{sec:ce}.
Finally, in Section~\ref{sec:related}, we observe that for ternary square-free words, having at most most 16 palindromes is equivalent to avoiding overpals and also to avoiding the letter pattern $abcacba$.  We use this observation to prove a conjecture of~\cite{ARS2017} on words avoiding overpals.  We also simplify the proofs (and improve certain results) of~\cite{P2016}
on the minimum critical exponent of square-free ternary words avoiding other letter patterns.

Several proofs throughout the paper make use of computer checks.  The programs used can be found at the link below.
\[
\href{http://www.lirmm.fr/~ochem/morphisms/palin3.htm}{\texttt{http://www.lirmm.fr/~ochem/morphisms/palin3.htm}}
\]
Throughout the paper, to check that a given word does not contain more than the claimed number of palindromes, observe that we only need to check the factors up to length $k+2$, where $k$ is the length of the longest allowed palindrome.

\section{Preliminaries}\label{sec:pre}
An \emph{alphabet} $\mathcal A$ is a finite set and its elements are called \emph{letters}. 
A \emph{word} $u$ over $\mathcal A$ of \emph{length} $n$ is a finite string $u = u_0 u_1 \cdots u_{n-1}$, where $u_j\in\mathcal A$ for all $j \in \{0,1,\dots, n-1\}$.
The length of $u$ is denoted $|u|$ and $|u|_{f}$ denotes the number of occurrences of the factor ${f}\in\mathcal A^+$ in the word $u$. If ${\mathcal A}=\{\tt 0, \tt 1, \dots, \tt d-1\}$, the \emph{Parikh vector} $ \vec{u} \in \mathbb N^{d}$ is the vector defined as ${\vec u} = (|u|_{\tt 0}, |u|_{\tt 1}, \dots, |u|_{\tt d-1})^{ T}$.
The set of all finite words over $\mathcal A$ is denoted ${\mathcal A}^*$. The set ${\mathcal A}^*$ equipped with concatenation as the operation forms a monoid with the \emph{empty word} $\varepsilon$ as the neutral element. If $w=ux$ for some $u,x \in {\mathcal A}^*$, then $wx^{-1}=u$.
We will also consider the set ${\mathcal A}^\omega$ of infinite words (that is, right-infinite words) and the set ${}^\omega\hspace{-0,15cm}{\mathcal A}^\omega$ of bi-infinite words.
A word $v$ is an $e$-\emph{power} of a word $u$ if $v$ is a~prefix of the infinite periodic word $uuu\cdots = u^\omega$ and $e=|v|/|u|$.
We write $v=u^e$. We also call $u^e$ a repetition with period $u$ and exponent $e$.
For instance, the Czech word $\texttt{kavka}$ (a kind of bird -- jackdaw) can be written in this formalism as $(\texttt{kav})^{5/3}$.
A word (finite or infinite) is \emph{$\alpha^+$-free} (resp. \emph{$\alpha$-free}) if it 
contains no repetition with exponent $\beta$ such that $\beta>\alpha$ (resp. $\beta\ge\alpha$).
For example, the word $\texttt{kavka}$ is $\frac{5}{3}^+$-free and square-free (i.e., $2$-free), but not $\frac{5}{3}$-free.

The \emph{critical exponent}  $E({\bf u} )$ of an infinite word ${\bf u}$ is defined as
$$
E({\bf u}) =\inf\acc{\beta\in\mathbb{R}: {\bf u} \text{ is $\beta^+$-free}}
$$
or equivalently as
$$E({\bf u}) =\sup\acc{e \in \mathbb{Q}: \ v ^e \text{ is a factor of } {\bf u} \text{ for a non-empty word } v}.$$
If each factor of ${\bf u}$ has infinitely many occurrences in ${\bf u}$, then ${\bf u}$ is \emph{recurrent}.
Moreover, if for each factor the distances between its consecutive occurrences are bounded, then ${\bf u}$ is \emph{uniformly recurrent}. 
The \emph{reversal} of the word $w=w_0w_1\cdots w_{n-1}$, where the $w_i$ are letters, is the word $w^R=w_{n-1}\cdots w_1 w_0$.
A word $w$ is a~\emph{palindrome} if $w=w^R$.


Consider a factor $w$ of a recurrent infinite word ${\bf u} = u_0 u_1 u_2 \cdots$. Let $j < \ell$ be two consecutive occurrences of $w$ in $\bf u$, i.e., $u_j u_{j+1} \cdots u_{j+|w|-1}=w=u_{\ell} u_{\ell+1} \cdots u_{\ell+|w|-1}$ and $u_ku_{k+1}\cdots u_{k+|w|-1}\neq w$ for all $k$ satisfying $j<k<\ell$. Then the word $u_j u_{j+1} \cdots u_{\ell-1}$ is a~\emph{return word} to $w$ in~$\bf u$.


Let $\LL({\bf u})$ denote the set of factors of $\bf u$, sometimes referred to as the \emph{language} of $\bf{u}$.
Given a word $w \in \LL({\bf u})$, we define the sets of left extensions, right extensions, and bi-extensions of $w$ in ${\bf u}$ over an alphabet $\mathcal A$ respectively as
$$
\mathrm{Lext}_{{\bf u}}(w) = \{ a \in {\mathcal A} : aw \in \LL({\bf u}) \},
\qquad
\mathrm{Rext}_{{\bf u}}(w) = \{ b \in {\mathcal A} : wb \in \LL({\bf u}) \}
$$
and
$$
\mathrm{Bext}_{{\bf u}}(w) = \{ (a,b) \in {\mathcal A} \times {\mathcal A} : awb \in \LL({\bf u}) \}. $$

If $\#\mathrm{Lext}_{{\bf u}}(w)>1$, then $w$ is called \emph{left special (LS)}. If $\#\mathrm{Rext}_{{\bf u}}(w)>1$, then $w$ is called \emph{right special (RS)}. If $w$ is both LS and RS, then it is called \emph{bispecial (BS)}.
The \emph{bilateral order~\footnote{The bilateral order was introduced by~\cite{Cassaigne1997} as a tool for the computation of factor complexity.}}
 of a factor $w$ is defined as $b(w)=\#\mathrm{Bext}_{{\bf u}}(w)-\#\mathrm{Lext}_{{\bf u}}(w)-\#\mathrm{Rext}_{{\bf u}}(w)+1$ and we distinguish \emph{ordinary BS factors} with $b(w)=0$, \emph{weak BS factors} with $b(w)<0$, and \emph{strong BS factors} with $b(w)>0$.

A \emph{morphism} is a map $\psi: {\mathcal A}^* \to {\mathcal B}^*$ such that $\psi(uv) = \psi(u)\psi(v)$  for all words $u, v \in {\mathcal A}^*$.
The morphism $\psi$ is \emph{non-erasing} if $\psi(a)\not =\varepsilon$ for each $a \in {\mathcal A}$.
Morphisms can be naturally extended to infinite words by setting
$\psi(u_0 u_1 u_2 \cdots) = \psi(u_0) \psi(u_1) \psi(u_2) \cdots\,$.
A \emph{fixed point} of a morphism $\psi:  {\mathcal A}^* \to  {\mathcal A}^*$ is an infinite word $\bf u$ such that $\psi(\bf u) = \bf u$.

If $\mathcal{A}=\{\tt{0},\tt{1},\ldots, \tt{d-1}\}$, then for each morphism $\psi: {\mathcal A}^* \to  {\mathcal A}^*$ there is an associated \emph{(incidence) matrix} $M_\psi$ defined for all $i,j \in \{0,1,\dots, d-1\}$ by $[M_\psi]_{ij}=|\psi(\tt j)|_{\tt i}$.   Note that multiplying the Parikh vector of a word $u \in {\mathcal A}^*$ by the incidence matrix $M_\psi$ gives the Parikh vector of $\psi(u)$, i.e., we have $\vv{\psi(u)}=M_\psi\vec{u}$ for all $u\in\mathcal{A}^*$.
If there exists $N\in \mathbb N$ such that $M_\psi^N$ has all positive entries, then $\psi$ is called a~\emph{primitive} morphism.


Let ${\bf u}$ be an infinite word over an alphabet $\mathcal A$ and let $\psi:{\mathcal A}^* \to {\mathcal B}^*$ be a~morphism. 
Consider a factor $w$ of $\psi({\bf u})$. We say that $(w_1, w_2)$ is a \emph{synchronization point} of $w$ if $w=w_1w_2$ and for all $p,s \in\LL(\psi({\bf u}))$ and $v \in\LL({\bf u})$ such that $\psi(v)=pws$ there exists a factorization $v=v_1v_2$ of $v$ with $\psi(v_1)=pw_1$ and $\psi(v_2)=w_2s$. We denote the synchronization point by $w_1\bullet w_2$. For instance, consider the morphism $\psi:\Sigma_2^*\rightarrow \Sigma_3^*$ defined by $\psi(\texttt{0})=\texttt{012}$ and  $\psi(\texttt{1})=\texttt{0012}$. Then the factor $\tt 12001$ of $\psi(({\tt 01})^\omega)$ has the synchronization point ${\tt 12}\bullet\tt{001}$ since $\tt 2$ occurs only as the last letter of $\psi({\tt 0})$ and $\psi({\tt 1})$.

Given a factorial language $L$ (a language where every factor of a word in the language is also part of that language) and an integer $\ell$, let $L^\ell$ denote the words of length $\ell$ in $L$.
The \emph{Rauzy graph} of $L$ of order $\ell$ is the directed graph
whose vertices are the words of $L^{\ell-1}$, the arcs are the words of $L^{\ell}$, and the arc
corresponding to the word $w$ goes from the vertex corresponding to the prefix of $w$ of length $\ell -1$
to the vertex corresponding to the suffix of $w$ of length $\ell -1$.


An \emph{overpal} is a word of the form $axax^Ra$, where $x^R$ is
the reversal of the (possibly empty) word $x$ and $a$ is a single letter.
Overpals were introduced by~\cite{ARS2017}.

A \emph{pattern} $p$ is a non-empty finite word over the alphabet
$\Delta=\acc{A,B,C,\dots}$ of capital letters called \emph{variables}.
An \emph{occurrence} of $p$ in a word $w$ is a non-erasing morphism $h:\Delta^*\to\Sigma^*$
such that $h(p)$ is a factor of $w$.
A \emph{letter pattern} is a non-empty finite word over the alphabet $\acc{a, b, c, \ldots}$ of lowercase variables
such that every variable stands for one letter from the alphabet $\acc{\texttt{0}, \texttt{1}, \texttt{2}, \ldots}$
and different variables denote different letters.
For example, the occurrences of the letter pattern $abaca$ over the ternary alphabet are
\texttt{01020}, \texttt{02010}, \texttt{10121}, \texttt{12101}, \texttt{20212}, \texttt{21202}.
Letter patterns were introduced by~\cite{P2016}.  We say that the word $w$ \emph{avoids} the pattern (or letter pattern) $p$ if $w$ contains no occurrence of $p$.

\section{Negative results}\label{sec:neg}


\begin{theorem}\label{thm:neg}
There exists no infinite ternary $\beta$-free word containing only $p$ palindromes
for the following pairs $(p, \beta)$.
\begin{enumerate}[label=\normalfont(\alph*)]
\item $(4,\beta)$\label{r4} for any fixed $\beta$.
\item $(5,\tfrac{10}3)$\label{r5}
\item $(15,2)$\label{r6}
\item $(17,\tfrac{41}{22})$\label{r16}
\end{enumerate}
\end{theorem}

\begin{proof}
Item~\ref{r4} follows from the fact that, up to renaming of the letters, the word $(\texttt{012})^\omega$ is the only infinite word with at most 4 palindromes.  The other cases are obtained by computer backtracking searches, the results of which are described below.
\begin{itemize}
    \item The longest $10/3$-free ternary word with at most $5$ palindromes has length $70$.
    \item The longest $2$-free ternary word with at most $15$ palindromes has length $42$.
    \item The longest $41/22$-free ternary word with at most $17$ palindromes has length $449$. \qedhere
\end{itemize}
\end{proof}

\section{Exponential cases}\label{sec:exp}
We need some terminology and a lemma by~\cite{Mol&Rampersad&Shallit:2020}.
A morphism $f:\Sigma^*\rightarrow\Delta^*$ is \emph{$q$-uniform\/} if $|f(a)|=q$ for every $a\in\Sigma$, and is called \emph{synchronizing} if for all $a,b,c\in\Sigma$ and $u,v\in \Delta^*$, if $f(ab)=uf(c)v$, then either $u=\varepsilon$ and $a=c$, or $v=\varepsilon$ and $b=c$.
\begin{lemma}[\cite{Mol&Rampersad&Shallit:2020}, Lemma 23]\label{lem:MRS}
Let $r,s\in\mathbb{R}$ satisfy $1<r<s$. Let $\rho\in\{r,r^+\}$ and $\sigma\in\{s,s^+\}$.
Let $h\colon \Sigma^*\rightarrow \Delta^*$ be a synchronizing $q$-uniform morphism. Set
$$t = \max\paren{\frac{2s}{s-r},\frac{2(q-1)(2s-1)}{q(s-1)}}.$$
If $h(w)$ is $\sigma$-free for every $\rho$-free word $w\in\Sigma^*$ with
$|w|\le t$, then $h(z)$ is $\sigma$-free for every $\rho$-free word $z \in \Sigma^*$.
\label{mrs}
\end{lemma}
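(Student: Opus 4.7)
The plan is to argue by contradiction: suppose there exists an $\alpha$-free word $z \in \Sigma^*$ with $h(z)$ not $\beta$-free, and from this produce a shorter $\alpha$-free word $w$ with $|w| \le t$ such that $h(w)$ is still not $\beta$-free, contradicting the hypothesis. Pick a factor $u$ of $h(z)$ of period $p$ whose exponent violates $\beta$-freeness (so roughly $|u|/p > b$), and write $h(z) = h(z_0)h(z_1)\cdots h(z_{n-1})$ as a concatenation of $q$-blocks.

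First I would shrink $u$ to its shortest prefix of period $p$ that still violates $\beta$-freeness, so that $|u| \le bp + O(1)$. The occurrence of $u$ in $h(z)$ then meets at most $\lceil |u|/q \rceil + 1$ consecutive blocks, and the corresponding factor $w$ of $z$ satisfies $h(w) \supseteq u$ and $|w| \le bp/q + O(1)$. In the \emph{short-period regime}, where $p$ is small enough that $bp/q + O(1) \le t$, this $w$ already provides the contradiction.

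In the \emph{long-period regime} I would instead transfer the period-$p$ repetition inside $h(z)$ back to a period-$m$ repetition inside $z$, where $m = \lceil p/q \rceil$. The mechanism is the synchronizing property: if $i$ is a block boundary of $h(z)$ lying well inside $u$, then periodicity of $u$ with period $p$ equates the $q$-window of $h(z)$ at position $i$ with the $q$-window at position $i+p$, and synchronizing pins down the latter as a single block $h(z_{j+m})$ (possibly shifted by the offset $p \bmod q$, which synchronizing handles uniformly). Since synchronizing also forces $h$ to be injective on letters (as is easily verified from the two admissible factorizations of $h(ab)$ when $h(a) = h(b)$ with $a \ne b$), this yields equalities $z_j = z_{j+m}$ over a long range of $j$, so $z$ itself contains a period-$m$ repetition of length roughly $|u|/q$, hence of exponent roughly $|u|/(qm) \approx |u|/p$, up to an $O(1/m)$ loss at the two boundary blocks. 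For $p$ above the second threshold this induced exponent strictly exceeds $a$, contradicting $\alpha$-freeness of $z$.

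The main obstacle is matching the two regimes so that every period $p$ falls into at least one of them. The short-period bound $|w| \le t$ forces $p$ to be small, while the long-period transfer requires $p$ large enough that $(|u|-2q)/(q\lceil p/q\rceil) > a$ holds inside $z$. Combined with $|u| > bp$ and $\lceil p/q \rceil \le (p+q-1)/q$, the overlap of the two ranges is controlled exactly by the maximum of $\tfrac{2b}{b-a}$ and $\tfrac{2(q-1)(2b-1)}{q(b-1)}$, which is the stated value of $t$; extracting these constants cleanly requires careful bookkeeping of off-by-one effects at the block boundaries and a uniform treatment of the offset $p \bmod q$.
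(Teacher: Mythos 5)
This lemma is not proved in the paper at all: it is imported verbatim from \cite[Lemma 23]{Mol&Rampersad&Shallit:2020}, so there is no in-paper argument to compare against. Measured against the actual proof in that reference (which descends from Ochem's generator lemma \cite{Ochem2004}), your sketch follows the standard route: take a minimal $\beta$-violating repetition $u$ with period $p$ in $h(z)$, and split into a short-period regime, where $u$ sits inside the image of a factor $w$ of $z$ of length at most $t$, and a long-period regime, where synchronization lifts the repetition to a repetition of exponent exceeding $a$ in $z$ itself. Your observation that synchronization forces injectivity on letters is correct, and the identification of which threshold in the definition of $t$ controls which regime is the right bookkeeping. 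The one place where your sketch is materially looser than the real argument is the long-period transfer: you set $m=\ceil{p/q}$ and wave at ``the offset $p \bmod q$, which synchronizing handles uniformly,'' whereas the actual mechanism is that for a sufficiently long repetition the synchronization property forces $q$ to divide $p$ outright (the block decomposition of the two equal $q$-windows at positions $i$ and $i+p$ is unique, so their boundaries must agree modulo $q$); once $q\mid p$ the lifted period is exactly $p/q$ and the exponent estimate $\paren{|u|-2(q-1)}/p>a$ goes through cleanly. With that correction, and the deferred off-by-one accounting actually carried out, your outline matches the cited proof.
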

Most results in this subsection use the following steps.
We find an appropriate uniform synchronizing morphism $h$ by exhaustive search.
We use Lemma~\ref{lem:MRS} to show that $h$ maps every ternary $\tfrac74^+$-free word
(resp. 4-ary $\tfrac75^+$-free word) to a suitable ternary $\beta^+$-free word.
Since there are exponentially many ternary $\tfrac74^+$-free words
(resp. 4-ary $\tfrac75^+$-free words), ref.~\cite{Ochem2004}, there are also exponentially many ternary $\beta^+$-free words with the desired number of palindromes.

\begin{theorem}\label{thm:exp_pairs}
There exist exponentially many infinite ternary $\beta^+$-free words containing at most $p$ palindromes
for the following pairs $(p, \beta)$.
\begin{enumerate}[label=\normalfont(\alph*)]
\item $(5,\tfrac{10}3)$\label{e5}
\item $(6,\tfrac94)$\label{e6}
\item $(7,2)$\label{e7}
\item $(16,\tfrac{52}{27})$\label{e16}
\item $(17,\tfrac{25}{13})$\label{e17}
\item $(18,\tfrac74)$\label{e18}
\end{enumerate}
\end{theorem}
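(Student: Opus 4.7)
The plan is to follow the recipe announced in the paragraph preceding the theorem and prove all six pairs by a uniform construction scheme. For each pair $(p,\beta)$, I would exhibit a $q$-uniform synchronizing morphism $h\colon\Sigma^*\to\{\texttt{0},\texttt{1},\texttt{2}\}^*$, where $\Sigma$ is the ternary alphabet (with $\alpha=\tfrac74^+$) for the cases with larger $\beta$, and the 4-letter alphabet (with $\alpha=\tfrac75^+$) for the cases with $\beta$ close to the ternary Dejean threshold $\tfrac74$, since there we need a looser source condition. The morphisms themselves would be produced by exhaustive computer search, searching simultaneously for the three properties that are required at the end.

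For each candidate $h$, I would carry out three finite verifications. First, check the synchronizing condition directly against the definition by looking at all factorizations $h(ab)=u\,h(c)\,v$ with $a,b,c\in\Sigma$. Second, compute the threshold
\[
t=\max\!\paren{\tfrac{2b}{b-a},\,\tfrac{2(q-1)(2b-1)}{q(b-1)}}
\]
from \Cref{lem:MRS} and check by exhaustive enumeration that $h(w)$ is $\beta^+$-free for every $\alpha$-free $w\in\Sigma^*$ with $|w|\le t$; \Cref{lem:MRS} then upgrades this to all $\alpha$-free $w$, hence to any infinite $\alpha$-free word over $\Sigma$. Third, verify that for any such $\mathbf{v}$ the image $h(\mathbf{v})$ has at most $p$ palindromic factors; as the proof of \Cref{thm:pairs} already observes, this reduces to inspecting factors of length at most $k+2$, where $k$ is the length of the longest palindrome in $h(\mathbf{v})$, which is a finite check on the factor set of $h(\mathbf{v})$ (itself determined from the factors of $\mathbf{v}$ of bounded length by uniformity and synchronization).

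Exponential growth of the output set is then immediate: by Ochem~\cite{Ochem2004} there are exponentially many infinite ternary $\tfrac74^+$-free words and exponentially many infinite 4-ary $\tfrac75^+$-free words. Since $h$ is uniform and synchronizing, distinct infinite source words produce distinct infinite images (two images agreeing on a long enough prefix force the preimages to agree on a long prefix), so the exponential lower bound transfers to the set of constructed $\beta^+$-free ternary words with at most $p$ palindromes.

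The main obstacle is the search itself: the three constraints (synchronization, $\beta^+$-freeness of $h(w)$ up to length $t$, and palindrome count $\le p$) have to be met simultaneously, and the threshold $t$ blows up through the factor $\tfrac{2b}{b-a}$ as $\beta$ approaches the source threshold, which happens exactly in the hardest cases \ref{e16}, \ref{e17}, and especially \ref{e18} where $\beta=\tfrac74^+$ matches the source exponent and one is forced to use the 4-ary source. For those cases one expects to need a relatively large uniformity $q$ and a significant backtracking budget; once an $h$ is found, however, every remaining verification is a routine finite computation.
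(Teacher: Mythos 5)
Your plan for cases \ref{e6}--\ref{e18} is essentially the paper's proof: for each pair the authors exhibit a concrete $q$-uniform synchronizing morphism (25-, 4-, 609-, 121-, and 87-uniform respectively), apply \Cref{lem:MRS} with source $\alpha=\tfrac74^+$ over three letters for \ref{e6}--\ref{e17} and $\alpha=\tfrac75^+$ over four letters for \ref{e18}, check palindromes on factors of bounded length, and quote~\cite{Ochem2004} for the exponential count. Your heuristic for when to switch to the 4-ary source matches what the paper does, and your injectivity argument for transferring the exponential lower bound is fine (the paper leaves it implicit).

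The one genuine divergence is case \ref{e5}. The paper does \emph{not} use a uniform morphism there: it applies the non-uniform morphism $\texttt{0}\mapsto\texttt{012}$, $\texttt{1}\mapsto\texttt{0012}$ to binary \emph{cube-free} words and proves $\tfrac{10}{3}^+$-freeness by a bespoke argument (short periods $|U|\le 17$ by finite check on preimages of length $24$; long periods by peeling off $\alpha f(x)\beta$ with $|\alpha|,|\beta|\le 3$ and deducing a cube in the source), since \Cref{lem:MRS} requires uniformity and cannot be invoked. Your blanket recipe would instead require finding a uniform synchronizing morphism from a $\tfrac74^+$-free ternary (or $\tfrac75^+$-free 4-ary) source whose images contain at most $5$ palindromes; the existence of such a morphism is not established by your argument and is far from obvious, because a ternary word with only $5$ palindromes is forced into a very rigid $\{\texttt{012},\texttt{0012}\}$-block structure that is naturally coded by a \emph{binary} source, and the target exponent $\tfrac{10}{3}$ arises precisely from cubes in that binary preimage. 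So as written, your proof of item \ref{e5} has a gap: either you must exhibit (and verify) a uniform morphism with the required properties, or you must switch, as the paper does, to a non-uniform construction with a direct freeness argument replacing \Cref{lem:MRS}.
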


\begin{proof}{\ }
\begin{enumerate}[label=(\alph*)]
\item $(5,\tfrac{10}3)$: We claim that applying the following morphism $f$ to any binary cube-free word gives a ternary $\tfrac{10}3^+$-free word containing only $5$ palindromes.
\begin{align*}
\texttt{0}&\rightarrow\texttt{012} \\
\texttt{1}&\rightarrow\texttt{0012}
\end{align*}
Since there are exponentially many cube-free binary words, ref.~\cite{Brandenburg1983}, it follows that there are exponentially many $\tfrac{10}{3}^+$-free ternary words containing at most $5$ palindromes. 

To prove the claim, let $w$ be a binary cube-free word. Suppose towards a contradiction that $f(w)$ contains a $\tfrac{10}{3}^+$-power, say $X=U^Q$, where $Q>\tfrac{10}{3}$. First suppose that $|U|\le 17$. Then $X$ contains a $\tfrac{10}{3}^+$-power of length at most $4|U|\le 68$. But every factor of $f(w)$ of length at most $68$ appears in the $f$-image of some cube-free word $y$ of length at most $\tfrac{68-2}{3}+2=24$. Using a computer, we enumerate every binary cube-free word $y$ of length at most $24$ and check that $f(y)$ is $\tfrac{10}{3}^+$-free. So we may assume that $|U|\ge 18$. Then we can write $X=\alpha f(x)\beta$, where $\alpha$ is a proper suffix of $f(\tt{0})$ or $f(\tt{1})$, $\beta$ is a proper prefix of $f(\tt{0})$ or $f(\tt{1})$, and $x$ is a factor of $w$. Note that $|\alpha|\le 3$ and $|\beta|\le 3$. 
Since $|U|\ge 18$ and $X=U^Q$ for some $Q>\tfrac{10}{3}$, we have 
\[
|f(x)|= |X|-|\alpha|-|\beta|>\tfrac{10}{3}|U|-6\ge 3|U|.
\]
So $f(x)$ has exponent at least $3$. It follows that $x$ has exponent at least $3$, which contradicts the assumption that $w$ is cube-free.

\item $(6,\tfrac94)$: Applying the following 25-uniform morphism to any ternary $\tfrac74^+$-free word gives a ternary $\tfrac94^+$-free word containing only $6$ palindromes.
\begin{align*}
\texttt{0}&\rightarrow\texttt{0112001200112011200112012} \\
\texttt{1}&\rightarrow\texttt{0012011200112012001120012} \\
\texttt{2}&\rightarrow\texttt{0011201120012011201200112}
\end{align*}

\item $(7,2)$: Applying the following 4-uniform morphism to any ternary $\tfrac74^+$-free word gives a ternary $2^+$-free word containing only $7$ palindromes.
\begin{align*}
\texttt{0}&\rightarrow\texttt{0012} \\
\texttt{1}&\rightarrow\texttt{0112} \\
\texttt{2}&\rightarrow\texttt{0122}
\end{align*}

\item $(16,\tfrac{52}{27})$: Applying the following 609-uniform morphism to any ternary $\tfrac74^+$-free word gives a ternary $\tfrac{52}{27}^+$-free word containing only $16$ palindromes.
{\footnotesize
\begin{align*}
\texttt{0}\rightarrow&\ p\texttt{21012010201210120212010201202120121012010201202120102012101202120121012010201}\\ 
&\ \texttt{210120212010201202120121012021201020121012010201202120102012101202120121012010}\\
&\ \texttt{201210120212010201202120121012010201202120102012101202120121012010201202120121}\\
&\ \texttt{012021201020121012010201202120121012010201210120212010201202120121012021201020}\\
&\ \texttt{121012010201202120102012101202120121012010201210120212010201202120121012021201}\\
&\ \texttt{0201210120102} \\
\texttt{1}\rightarrow&\ p\texttt{02012101202120121012010201202120121012021201020121012010201202120121012010201}\\
&\ \texttt{210120212010201202120121012021201020121012010201202120102012101202120121012010}\\
&\ \texttt{201210120212010201202120121012021201020121012010201202120121012010201210120212}\\
&\ \texttt{010201202120121012010201202120102012101202120121012010201202120121012021201020}\\
&\ \texttt{121012010201202120121012010201210120212010201202120121012021201020121012010201}\\
&\ \texttt{2021201020121} \\
\texttt{2}\rightarrow&\ p\texttt{02012101202120121012010201202120121012021201020121012010201202120121012010201}\\
&\ \texttt{210120212010201202120121012021201020121012010201202120102012101202120121012010}\\
&\ \texttt{201210120212010201202120121012010201202120102012101202120121012010201202120121}\\
&\ \texttt{012021201020121012010201202120121012010201210120212010201202120121012010201202}\\
&\ \texttt{120102012101202120121012010201210120212010201202120121012021201020121012010201}\\
&\ \texttt{2021201020121}
\end{align*}}
where {\footnotesize $p=\texttt{01202120121012010201210120212010201202120121012010201202120102012101202}\\\texttt{120121012010201202120121012021201020121012010201202120102012101202120121012010201}\\\texttt{2101202120102012021201210120212010201210120102012021201}$.}
\item $(17,\tfrac{25}{13})$: Applying the following 121-uniform morphism to any ternary $\tfrac74^+$-free word gives a ternary $\tfrac{25}{13}^+$-free word containing only $17$ palindromes.
{\footnotesize
\begin{align*}
\texttt{0}\rightarrow p\texttt{1202101201021202102010210120210201021202101201020120210120102120210}\\
\texttt{1}\rightarrow p\texttt{1202101201020120210120102120210201021012021020102120210120102120210}\\
\texttt{2}\rightarrow p\texttt{1012021201020120210120102120210201021012021020102120210120102012021}
\end{align*}
}
where {\footnotesize$p=\texttt{201021012021201020120210120102012021201021012021020102}$}.
\item $(18,\tfrac74)$: Applying the following 87-uniform morphism to any 4-ary $\tfrac75^+$-free word gives a ternary $\tfrac74^+$-free word containing only $18$ palindromes.
{\footnotesize
\begin{align*}
\texttt{0}\rightarrow p\texttt{2101202102010212021012010201210120210201021012021201210}\\
\texttt{1}\rightarrow p\texttt{2101202102010210121021202101201020121012021020102120210}\\
\texttt{2}\rightarrow p\texttt{2101202102010210120212010201210120210201021012102120210}\\
\texttt{3}\rightarrow p\texttt{0201202101210212021012010201202120121012021020102101202}
\end{align*}}
where {\footnotesize$p=\texttt{12010201202101210201021012021201}$}.\qedhere
\end{enumerate}
\end{proof}

Trying to estimate the growth rate in the cases above is certainly a difficult task.
However, we note that~\cite{FleischerShallit} have shown that the number of ternary words
of length $n$ with at most 5 palindromes (sequence \href{https://oeis.org/A329023}{A329023} in the OEIS) is $\Theta\paren{\kappa^n}$, where $\kappa=1.2207440846\cdots$ is a root of $x^4=x+1$.

\section{First polynomial case}\label{sec:poly1}
We consider the morphic word $t^\omega(\texttt{0})$, where $t$ is the morphism defined as follows.
\begin{align*}
t(\texttt{0})&=\texttt{01120} \\
t(\texttt{1})&=\texttt{12001} \\
t(\texttt{2})&=\texttt{2} \\
\end{align*}

\begin{lemma}\label{Lem:thetaTM}
The word $t^\omega(\normaltt{0})$ is obtained from the Thue-Morse word
(i.e., the fixed point $\mu^\omega(\tt{0})$ of the morphism $\mu$ defined by $\normaltt{0}\mapsto\normaltt{01}$ and $\normaltt{1}\mapsto\normaltt{10}$)
by inserting the letter {\normaltt{2}} in the middle of every factor {\normaltt{10}}.
\end{lemma}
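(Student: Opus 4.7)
The plan is to define a deletion morphism $\phi\colon\{\texttt{0},\texttt{1},\texttt{2}\}^*\to\{\texttt{0},\texttt{1}\}^*$ by $\phi(\texttt{0})=\texttt{0}$, $\phi(\texttt{1})=\texttt{1}$, $\phi(\texttt{2})=\varepsilon$, and to show that it conjugates $t$ with the square of the Thue-Morse morphism $\mu\colon\texttt{0}\mapsto\texttt{01},\ \texttt{1}\mapsto\texttt{10}$. A direct check on the three letters gives $\phi\circ t=\mu^2\circ\phi$, so iterating and passing to the limit yields $\phi(t^\omega(\texttt{0}))=\mu^\omega(\texttt{0})$; in other words, erasing every $\texttt{2}$ from $t^\omega(\texttt{0})$ produces the Thue-Morse word.

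To finish, I would establish two claims about $\mathbf{w}:=t^\omega(\texttt{0})$: (b) the factor $\texttt{10}$ does not occur in $\mathbf{w}$, and (c) every occurrence of $\texttt{2}$ in $\mathbf{w}$ is immediately preceded by $\texttt{1}$ and immediately followed by $\texttt{0}$. Both follow once the set of length-$2$ factors of $\mathbf{w}$ is identified as exactly $L=\{\texttt{00},\texttt{01},\texttt{11},\texttt{12},\texttt{20}\}$. I would prove this by induction on $n$, showing that every length-$2$ factor of $t^n(\texttt{0})$ lies in $L$. The key simplification is that $t(a)$ starts and ends with the letter $a$ for each $a\in\{\texttt{0},\texttt{1},\texttt{2}\}$; consequently, the length-$2$ factor spanning any boundary $t(a)t(b)$ is just $ab$ itself, which is already a factor of $t^n(\texttt{0})$ by the induction hypothesis. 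Combined with the immediate check that the internal length-$2$ factors of $t(\texttt{0})=\texttt{01120}$, $t(\texttt{1})=\texttt{12001}$, and $t(\texttt{2})=\texttt{2}$ all lie in $L$, the induction closes.

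The conclusion is then a bookkeeping exercise. In $\mathbf{w}$, between two consecutive non-$\texttt{2}$ letters $x$ and $y$ of $\phi(\mathbf{w})$, there can be no $\texttt{2}$ unless $xy=\texttt{10}$ (by (c), since a $\texttt{2}$ must be flanked by $\texttt{1}$ on the left and $\texttt{0}$ on the right), and when $xy=\texttt{10}$ there must be exactly one $\texttt{2}$ (at least one by (b), at most one because $\texttt{22}\notin L$). Since $\phi(\mathbf{w})$ is the Thue-Morse word, this is precisely the claim of the lemma. The only mildly delicate step is the closure of $L$ under $t$, and that reduces to the fixed-endpoint property of $t$ noted above; I do not foresee any real obstacle.
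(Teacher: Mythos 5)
Your proposal is correct and follows essentially the same route as the paper: erase the letter \texttt{2} via the deletion morphism to recover the Thue--Morse word (the paper likewise observes that erasure yields the fixed point of $\texttt{0}\mapsto\texttt{0110}$, $\texttt{1}\mapsto\texttt{1001}$, i.e.\ $\mu^2$), and then verify the placement of \texttt{2} by showing that $t^\omega(\texttt{0})$ avoids \texttt{10}, \texttt{02}, \texttt{21}, and \texttt{22}. Your inductive determination of the length-$2$ factor set and the explicit conjugation $\phi\circ t=\mu^2\circ\phi$ merely supply details the paper leaves as routine checks.
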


\begin{proof}
If we erase every occurrence of \texttt{2} from $t^\omega(\texttt{0})$,
we get the fixed point of $\texttt{0}\mapsto\texttt{0110}$; $\texttt{1}\mapsto\texttt{1001}$,
that is, the Thue-Morse word. Now let us check that the letter \texttt{2}
is inserted correctly. The word $t^\omega(\texttt{0})$ avoids \texttt{10}, so that \texttt{2}
must have been inserted in the middle of every factor $\texttt{10}$.
The fixed point of $t$ also avoids \texttt{02}, \texttt{21}, and $\texttt{22}$, so that \texttt{2}
is not inserted anywhere else.
\end{proof}

\begin{theorem}\label{overlap}
The word $t^\omega(\normaltt{0})$ contains 6 palindromes and is $2^+$-free.
Every bi-infinite ternary $\tfrac94$-free word containing at most 6 palindromes has the same factor set as one
of the six words obtained from $t^\omega(\normaltt{0})$ by letter permutation.
\end{theorem}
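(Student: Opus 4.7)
The proof naturally splits into three tasks: verifying the palindrome count for $t^\omega(\texttt{0})$, establishing its $2^+$-freeness, and classifying the bi-infinite ternary $\tfrac94$-free words with at most $6$ palindromes.

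For the palindrome count, I would show the $6$ palindromes are exactly $\varepsilon, \texttt{0}, \texttt{1}, \texttt{2}, \texttt{00}, \texttt{11}$. By Lemma~\ref{Lem:thetaTM}, the word $t^\omega(\texttt{0})$ avoids $\texttt{10}$, $\texttt{02}$, $\texttt{21}$, $\texttt{22}$ by construction, and inherits cube-freeness from Thue-Morse (so $\texttt{000}$ and $\texttt{111}$ are also absent). A short enumeration then shows that no factor of length $3$ or $4$ is a palindrome, since any candidate $aba$ or $abba$ requires one of the forbidden length-$2$ factors. Because every palindrome of length $\ell\ge 3$ contains a palindromic central factor of length $\ell-2$, the absence of length-$3$ and length-$4$ palindromes excludes all palindromes of length $\ge 3$.

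For $2^+$-freeness I would argue by contradiction. Suppose $t^\omega(\texttt{0})$ contains an overlap $axaxa$, and project it onto Thue-Morse by deleting every $\texttt{2}$. If $a\in\{\texttt{0},\texttt{1}\}$, the projection is $ax'ax'a$, itself an overlap in Thue-Morse, contradicting its overlap-freeness. If $a=\texttt{2}$, then $x$ starts with $\texttt{0}$ and ends with $\texttt{1}$; writing $x=\texttt{0}x'\texttt{1}$ and projecting yields a Thue-Morse factor $\texttt{1}yy\texttt{0}$ with $y=\texttt{0}x''\texttt{1}$ (where $x''$ is $x'$ with its $\texttt{2}$'s removed), which contains the overlap $\texttt{1}(\texttt{0}x'')\texttt{1}(\texttt{0}x'')\texttt{1}$ of period $|y|$, again contradicting the overlap-freeness of Thue-Morse.

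The classification is the main obstacle. Let ${\bf u}$ be a bi-infinite ternary $\tfrac94$-free word with at most $6$ palindromes. A finite case analysis on which palindromes occur (the four of $\varepsilon$ and single letters are forced, leaving at most two further palindromes to distribute among lengths $\ge 2$) reduces the problem, up to one of the six letter permutations, to a small list of palindrome profiles. For each profile I would perform a backtracking search on the Rauzy graph enforcing both the absence of palindromes outside the profile and the $\tfrac94$-freeness; all profiles except the one matching $t^\omega(\texttt{0})$ must terminate in finitely many steps, and for the surviving profile the search determines a unique factor set up to some length $N$ agreeing with that of $t^\omega(\texttt{0})$. To promote agreement on length-$N$ factors to equality of the full factor sets, I would use a synchronization argument: the morphism $t$ is synchronizing on its image, so every sufficiently long factor of ${\bf u}$ admits a unique $t$-decomposition and must therefore lie in $\LL(t^\omega(\texttt{0}))$ (up to permutation). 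The primary challenges are carrying out the backtracking (which is finite but likely needs computer verification) and verifying carefully that the synchronization step captures all bi-infinite sequences, not just one-sided infinite ones.
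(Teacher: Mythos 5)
Your treatment of the palindrome count and of $2^+$-freeness is correct. The projection-onto-Thue--Morse argument for overlap-freeness is essentially the ``alternative'' argument the paper only alludes to (its primary route is a machine check that $t^\omega(\texttt{0})$ avoids the pattern $ABABA$), and your central-factor argument excluding all palindromes of length at least $3$ is sound, so this half of your proof is, if anything, more self-contained than the paper's.

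The classification, however, has a genuine gap at exactly the point you flag as the main obstacle. Agreement of factor sets up to some finite length $N$, together with unique $t$-decomposability of long factors, does not imply that the full factor sets coincide: unique decomposability only gives $\mathbf{u}=t(\mathbf{u}')$ for some preimage $\mathbf{u}'$, and you would then have to control $\mathbf{u}'$ (e.g.\ show it satisfies the same hypotheses and induct, as the paper does for $\eta$ and $\gamma$ in Lemmas~\ref{hgood} and~\ref{ggood}); for $t$ this is delicate because $t(\texttt{2})=\texttt{2}$ is non-growing and because $\tfrac94$-freeness is not obviously inherited by the preimage. The paper closes the argument by a different route: after pinning down the six palindromes and the return words to $\texttt{2}$, it writes $\mathbf{w}$ as the $\texttt{2}$-insertion of a binary word $\mathbf{v}$, proves quantitatively that $\tfrac94$-freeness of $\mathbf{w}$ forces $\tfrac73$-freeness of $\mathbf{v}$ (via the inequalities $|X|\ge\tfrac54|x|-1$ and $|Y|\le\tfrac32|y|+2$, with a finite computer check for short periods), and then invokes the Karhum\"aki--Shallit rigidity theorem that every bi-infinite $\tfrac73$-free binary word has the factor set of Thue--Morse. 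That rigidity theorem (or an explicit desubstitution induction replacing it) is the missing ingredient in your plan; without it, no finite backtracking can rule out a word agreeing with $t^\omega(\texttt{0})$ on all factors of length at most $N$ but diverging beyond.
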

\begin{proof}
Rosenfeld has written a program that implements the algorithm of~\cite{cassaignealgo} to test whether a morphic word $w$ avoids a pattern $P$, with $w$ and $P$ as input.
We use it to show that $t^\omega(\texttt{0})$ avoids the pattern $ABABA$. Moreover, $t^\omega(\texttt{0})$ avoids \texttt{000},
\texttt{111}, and \texttt{22}. So, $t^\omega(\texttt{0})$ is $2^+$-free.
Alternatively, one could argue that the insertion of the letter \texttt{2}
in the middle of every factor $\texttt{10}$ in the Thue-Morse word does not create an overlap.

Suppose now that $\mathbf{w}$ is a ternary $\tfrac{9}{4}$-free word containing at most $6$ palindromes.
Note that $\mathbf{w}$ must contain the four palindromes $\varepsilon$, $\texttt{0}$, $\texttt{1}$, and $\texttt{2}$.
Further, we see that $\mathbf{w}$ must contain two palindromes from the set $\{\texttt{00},\texttt{11},\texttt{22}\}$, because a backtracking search shows that there are only finitely many ternary $\tfrac{9}{4}$-free words containing at most six palindromes and containing at most one palindrome from $\{\texttt{00},\texttt{11},\texttt{22}\}$.
By permuting the letters if necessary, we assume that $\mathbf{w}$ contains the palindromes $\texttt{00}$ and $\texttt{11}$.

So $\mathbf{w}$ contains the six palindromes $\varepsilon$, $\texttt{0}$, $\texttt{1}$, $\texttt{2}$, $\texttt{00}$, and $\texttt{11}$, and hence no $\texttt{22}$ and no palindrome of the form $aba$ or $abba$, where $a$ and $b$ are letters. Thus, it is not hard to see that every return word to $\tt{2}$ in $\mathbf{w}$ belongs to $R_1\cup R_2$, where
\[
R_1=\{\texttt{201},\texttt{2001},\texttt{2011},\texttt{20011}\}
\]
and 
\[
R_2=\{\texttt{210},\texttt{2110},\texttt{2100},\texttt{21100}\}.
\]
Further, we see that $\mathbf{w}$ does not contain factors from both $R_1$ and $R_2$ (otherwise it would contain either $\tt{020}$ or $\tt{121}$). By permuting $\texttt{0}$ and $\texttt{1}$ if necessary, we may assume that $\mathbf{w}$ contains no factors from $R_2$. It follows that $\mathbf{w}$ is obtained from a binary word $\mathbf{v}$ by inserting the letter $\texttt{2}$ in the middle of every factor $\texttt{10}$. (Note that $\mathbf{v}$ is obtained from $\mathbf{w}$ by deleting every $\texttt{2}$.)

We claim that $\mathbf{v}$ is $\tfrac{7}{3}$-free. Suppose otherwise that $\mathbf{v}$ contains a factor, say $u$, of exponent greater than $\tfrac{7}{3}$. It follows that $u$ has a prefix of the form $xyxyx$, where $|xyxyx|/|xy|>\tfrac{7}{3}$, or equivalently 
\begin{align}\label{Ineq:7/3}
    |y|<2|x|.
\end{align}  
Let $U$ be the word obtained from $xyxyx$ by inserting the letter $\texttt{2}$ in the middle of every factor $\texttt{10}$, so that $U$ is a factor of $\mathbf{w}$. Let $X$ be the word obtained from $x$ by inserting the letter $\texttt{2}$ in the middle of every factor $\tt{10}$, and let $Y$ be the word such that $V=XYXYX$. (So $Y$ is obtained from $y$ by inserting the letter $\texttt{2}$ in the middle of every factor $\texttt{10}$, and possibly adding a $\texttt{2}$ at the beginning and/or the end.)

If $|x|\le 6$, then we have $|y|<2|x|\le 12$, hence there are only finitely many possibilities for the word $xyxyx$. For each possible word $xyxyx$, we check by computer that the corresponding word $U=XYXYX$ has exponent greater than $\tfrac{9}{4}$, which contradicts the assumption that $\mathbf{w}$ is $\tfrac{9}{4}$-free. So we may assume that $|x|\ge 7$. 

We now claim that 
\begin{align}\label{Ineq:X}
    |X|\ge \tfrac{5}{4}|x|-1
\end{align} 
and 
\begin{align}\label{Ineq:Y}
    |Y|\le \tfrac{3}{2}|y|+2.
\end{align}
For (\ref{Ineq:X}), note that $|X|=|x|+|x|_{\texttt{10}}$. Since $\mathbf{w}$ (and $\mathbf{v}$, in turn) contains neither $\tt{000}$ nor $\tt{111}$, the factor $\tt{10}$ must occur at least $\tfrac{|x|}{4}-1$ times in $x$ if $|x|$ is a multiple of $4$, and at least $\floor{\tfrac{|x|}{4}}$ times otherwise. Thus we have
\[
|X|=|x|+|x|_{\texttt{10}}\ge |x|+\tfrac{|x|}{4}-1=\tfrac{5}{4}|x|-1,
\]
as desired. For (\ref{Ineq:Y}), we have
\[
|Y|\le |y|+|y|_{\texttt{10}}+2\le |y|+\tfrac{|y|}{2}+2=\tfrac{3}{2}|y|+2
\]
by similar reasoning.
Thus, we have
\begin{align*}
    |Y|&\le \tfrac{3}{2}|y|+2 & & \text{\footnotesize by (\ref{Ineq:Y})}\\
    &<3|x|+2  & & \text{\footnotesize by (\ref{Ineq:7/3})}\\
    &\le \tfrac{12}{5}|X|+\tfrac{22}{5} & & \text{\footnotesize by (\ref{Ineq:X})}\\
    &<3|X| & & \text{\footnotesize since $|x|\ge 7$, hence $|X|\ge 8$.}
\end{align*}
Thus, we have shown that $|Y|<3|X|$, which is equivalent to $|XYXYX|/|XY|>\tfrac{9}{4}$. In other words, we have shown that $U=XYXYX$ has exponent greater than $\tfrac{9}{4}$, which contradicts the assumption that $\mathbf{w}$ is $\tfrac{9}{4}$-free. 

Therefore, we conclude that $\mathbf{v}$ is $\tfrac{7}{3}$-free. Now it follows from a well-known theorem of~\cite{KarhumakiShallit2004} that $\mathbf{v}$ has the same factor set as the Thue-Morse word. Thus, by Lemma~\ref{Lem:thetaTM}, we conclude that $\mathbf{w}$ has the same factor set as $t^\omega(\texttt{0})$.
\end{proof}

\section{Second polynomial case}\label{sec:poly2}
We consider the morphic word $\gamma(\eta^\omega(\texttt{0}))$ defined by the following morphisms.
\begin{align*}
    \eta(\texttt{0}) &= \texttt{010203} & \gamma(\texttt{0}) &= \texttt{012021201210120102}\\
    \eta(\texttt{1}) &= \texttt{2013} & \gamma(\texttt{1}) &= \texttt{012101202120102}\\
    \eta(\texttt{2}) &= \texttt{0132} & \gamma(\texttt{2}) &= \texttt{012021201020121}\\
    \eta(\texttt{3}) &= \texttt{0102013203} & \gamma(\texttt{3}) &= \texttt{012021201210120212010201210120102}
\end{align*}
We say that a $4$-ary word (finite or infinite) is $\eta$-good if it avoids $AA$ and
\begin{align*}
F_\eta&=\{\texttt{12},\texttt{21},\texttt{23},\texttt{31},\texttt{103},\texttt{302},\texttt{303},\texttt{132013},\texttt{320132},\\
& \ \ \ \ \ \ \ \ \texttt{2010201},\texttt{2013201},\texttt{3013203},\texttt{030102030}\}.
\end{align*}
We say that a ternary word (finite or infinite) is $\gamma$-good if it avoids $AA$ and
\begin{align*}
F_\gamma&=\{\texttt{0210},\texttt{1021},\texttt{2102},\texttt{012010201210120212012},\texttt{020121012021201020121},\\ & \ \ \ \ \ \ \ \ \texttt{101202120102012101202},\texttt{120121012021201020120},\\ & \ \ \ \ \ \ \ \ \texttt{201202120102012101201},\texttt{212010201210120212010}\}.
\end{align*}

\begin{lemma}\label{hgood}
A bi-infinite word is $\eta$-good if and only if it has the same factor set as $\eta^\omega(\normaltt{0})$.
\end{lemma}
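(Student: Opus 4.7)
The plan is to prove the two directions of the equivalence separately. The easier direction amounts to verifying that $\eta^\omega(\texttt{0})$ is itself $\eta$-good: any word with the same factor set will then automatically inherit $\eta$-goodness. To do this I would use Cassaigne's algorithm (as in the proof of \Cref{overlap}) to certify square-freeness, and separately check that none of the finitely many forbidden factors in $F_\eta$ (all of length at most $9$) appears in $\eta^k(\texttt{0})$ for some $k$ large enough to expose all factors of that length.

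For the harder direction, let $\mathbf{w}$ be any bi-infinite $\eta$-good word. The strategy is a desubstitution argument: show that $\mathbf{w}$ admits a unique factorization $\mathbf{w} = \eta(\mathbf{w}')$ for some bi-infinite word $\mathbf{w}'$, and that $\mathbf{w}'$ is again $\eta$-good. Iterating yields, for every finite factor $f$ of $\mathbf{w}$, a level $k$ and a letter $a$ (occurring in the $k$-th desubstitution) with $f$ a factor of $\eta^k(a)$, so that $f \in \LL(\eta^\omega(\texttt{0}))$. This gives $\LL(\mathbf{w}) \subseteq \LL(\eta^\omega(\texttt{0}))$, and the reverse inclusion follows because $\eta^\omega(\texttt{0})$ is uniformly recurrent (being the fixed point of the primitive morphism $\eta$) and $\mathbf{w}$ is bi-infinite.

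The desubstitution step is where the forbidden set $F_\eta$ earns its keep. The short factors \texttt{12}, \texttt{21}, \texttt{23}, \texttt{31}, \texttt{103}, \texttt{302}, \texttt{303} impose rigid local constraints on the occurrences of the letters \texttt{1}, \texttt{2}, \texttt{3}, and together with the positions of these letters inside the four images $\eta(\texttt{0}), \eta(\texttt{1}), \eta(\texttt{2}), \eta(\texttt{3})$ and the lengths of those images, they determine most block boundaries uniquely. The longer forbidden factors \texttt{132013}, \texttt{320132}, \texttt{2010201}, \texttt{2013201}, \texttt{3013203}, \texttt{030102030} are tailored precisely to exclude the residual ambiguous configurations that the short constraints alone leave open.

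The main obstacle is verifying that the desubstituted word $\mathbf{w}'$ remains $\eta$-good. A square in $\mathbf{w}'$ would yield a square under $\eta$ in $\mathbf{w}$, a direct contradiction; the subtler task is to show that no $u \in F_\eta$ can occur in $\mathbf{w}'$. For each such $u$, I would verify (by computer-assisted enumeration) that every context in which $\eta(u)$ can appear inside an $\eta$-image contains either a square or some element of $F_\eta$, so that a hypothetical occurrence of $u$ in $\mathbf{w}'$ forces a forbidden pattern in $\mathbf{w}$. This case analysis is finite but tedious, and I expect it to constitute the technical core of the proof.
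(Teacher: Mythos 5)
Your proposal follows essentially the same route as the paper: both directions are handled the same way, and the key step---showing that any bi-infinite $\eta$-good word desubstitutes under $\eta$ into another $\eta$-good word and then inducting---is exactly the paper's argument, with the paper implementing the required computer checks by computing the set $S_\eta^{58}$ of extendable length-$58$ factors (which turns out to equal the set of length-$58$ factors of $\eta^\omega(\texttt{0})$, to force the block parsing, and to contain no $\eta(f)$ with $f\in F_\eta$). The only point to patch is your claim that every factor of $\mathbf{w}$ eventually sits inside $\eta^k(a)$ for a single letter $a$: a factor can straddle a block boundary at every level, so in general it sits inside $\eta^k(u)$ for a short factor $u$ of the level-$k$ word, and one must separately know that such short factors of $\eta$-good words occur in $\eta^\omega(\texttt{0})$---which is precisely what the paper's $S_\eta^{58}$ computation supplies.
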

\begin{proof}
It is easy to check that $\eta^\omega(\texttt{0})$ avoids $F_\eta$ and we use Rosenfeld's program to show that $\eta^\omega(\texttt{0})$ avoids $AA$.
So $\eta^\omega(\texttt{0})$ is $\eta$-good.

We construct the set $S_\eta^{58}$ defined as follows:
a word $v$ is in $S_\eta^{58}$ if and only if there exists an $\eta$-good word $pvs$ such that $|p|=|v|=|s|=58$.
We check that $S_\eta^{58}$ is exactly the set of factors of length 58 of $\eta^\omega(\texttt{0})$.
Thus, if $w$ is any bi-infinite $\eta$-good word, then its factors of length 58 are also factors of $\eta^\omega(\texttt{0})$.
Now, every element of $S_\eta^{58}$ contains the factor $\eta(\texttt{0})=\texttt{010203}$
and every element of $S_\eta^{58}$ with prefix $\eta(\texttt{0})$ has a prefix in
\[ \acc{\eta(\texttt{010}),\eta(\texttt{020}),\eta(\texttt{030}),\eta(\texttt{0130}),\eta(\texttt{0320}),\eta(\texttt{01320})}.
\]
So $w\in\{\eta(\texttt{01}),\eta(\texttt{02}),\eta(\texttt{03}),\eta(\texttt{013}),\eta(\texttt{032}),\eta(\texttt{0132})\}^\omega
$, hence $w\in\acc{\eta(\texttt{0}),\eta(\texttt{1}),\eta(\texttt{2}),\eta(\texttt{3})}^\omega$.
Thus we have $w=\eta(u)$ for some bi-infinite 4-ary word $u$.
Notice that $u$ must be square-free since $w=\eta(u)$ is square-free.
We also check that for every factor $f\in F_\eta$, the set $S_\eta^{58}$ does not contain $\eta(f)$.
Since $\max_{f\in F_\eta}|\eta(f)|=58$, the pre-image $u$ of $w$ does not contain any factor in $F_\eta$.
So $u$ is also $\eta$-good. By induction, $u$ and $w$ have the same factor set as $\eta^\omega(\texttt{0})$.
\end{proof}

\begin{lemma}\label{ggood}
A bi-infinite word is $\gamma$-good if and only if it has the same factor set as $\gamma(\eta^\omega(\normaltt{0}))$.
\end{lemma}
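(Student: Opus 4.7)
The plan is to closely parallel the argument of Lemma~\ref{hgood}. For the forward direction, I would verify that $\gamma(\eta^\omega(\texttt{0}))$ is $\gamma$-good: square-freeness follows by applying Rosenfeld's implementation of Cassaigne's algorithm to the morphic word $\gamma(\eta^\omega(\texttt{0}))$, and avoidance of the finite set $F_\gamma$ (whose longest element has length $21$) reduces to a bounded check on $\gamma$-images of short factors of $\eta^\omega(\texttt{0})$.

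For the converse, let $\mathbf{w}$ be any bi-infinite $\gamma$-good word. I would compute, for a suitably large length $N$, the set $S_\gamma^N$ of length-$N$ factors $v$ admitting a bi-infinite $\gamma$-good extension $pvs$ with $|p|=|v|=|s|=N$. Two constraints fix $N$: it must be large enough that every element of $S_\gamma^N$ contains a canonical synchronization marker (an occurrence of some $\gamma(\texttt{i})$), and it must exceed $\max_{f \in F_\eta}|\gamma(f)|$ so that $F_\eta$-forbidden factors are detectable within $S_\gamma^N$. Since $|\gamma(\texttt{3})|=33$ and the longest element of $F_\eta$ is $\texttt{030102030}$, whose $\gamma$-image has length $186$, a value of $N$ on the order of $200$ should suffice, with the exact value to be fixed by experimentation.

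The crux is the computational verification that every element of $S_\gamma^N$ admits a unique $\gamma$-factorization extending from its canonical synchronization marker. The forbidden factors in $F_\gamma$ appear crafted precisely for this purpose: the short factors $\texttt{0210}$, $\texttt{1021}$, $\texttt{2102}$ block certain unwanted alignments, while the six length-$21$ factors eliminate the residual ambiguity caused by the long common structure shared by $\gamma(\texttt{0})$, $\gamma(\texttt{1})$, $\gamma(\texttt{2})$, and $\gamma(\texttt{3})$. Once this check is in hand, $\mathbf{w} = \gamma(\mathbf{u})$ for some bi-infinite 4-ary word $\mathbf{u}$.

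Finally, I would verify that $\mathbf{u}$ is $\eta$-good. Square-freeness transfers immediately: any square $XX$ in $\mathbf{u}$ produces the square $\gamma(X)\gamma(X)$ in $\mathbf{w}$, contradicting $\gamma$-goodness. For each $f \in F_\eta$, a computational check that $\gamma(f) \notin S_\gamma^N$ (meaningful since $|\gamma(f)|\le N$) shows that $\mathbf{u}$ avoids $f$; otherwise the unique $\gamma$-factorization would place $\gamma(f)$ inside $\mathbf{w}$ and hence inside $S_\gamma^N$. Lemma~\ref{hgood} then yields that $\mathbf{u}$ has the same factor set as $\eta^\omega(\texttt{0})$, whence $\mathbf{w} = \gamma(\mathbf{u})$ has the same factor set as $\gamma(\eta^\omega(\texttt{0}))$. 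The main obstacle is the non-uniformity of $\gamma$, with image lengths $18$, $15$, $15$, $33$: unlike the uniform case, establishing synchronization requires a delicate case analysis, and the correspondingly larger value of $N$ makes the computation of $S_\gamma^N$ and the uniqueness-of-factorization check the computationally intensive heart of the proof.
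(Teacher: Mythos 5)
Your proposal follows essentially the same route as the paper: establish $\gamma$-goodness of $\gamma(\eta^\omega(\texttt{0}))$ directly, compute the set $S_\gamma^N$ of extendable length-$N$ factors (the paper takes $N=186=\max_{f\in F_\eta}|\gamma(f)|$ exactly), use a synchronization/unique-factorization check on $S_\gamma^N$ to write $\mathbf{w}=\gamma(\mathbf{u})$, transfer square-freeness and $F_\eta$-avoidance to $\mathbf{u}$ via the check $\gamma(f)\notin S_\gamma^N$, and conclude by Lemma~\ref{hgood}. The approach and all key steps match; the only differences are cosmetic (the paper anchors the factorization by checking that every element of $S_\gamma^{186}$ contains $\gamma(\texttt{0})$ and that the possible continuations after $\gamma(\texttt{0})$ are exactly the $\gamma$-images of the legal two-letter-or-longer extensions of $\texttt{0}$ in $\eta^\omega(\texttt{0})$).
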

\begin{proof}
It is easy to check that $\gamma(\eta^\omega(\texttt{0}))$ avoids $F_\gamma$ and we use Rosenfeld's program to show that $\gamma(\eta^\omega(\texttt{0}))$ avoids $AA$.
So $\gamma(\eta^\omega(\texttt{0}))$ is $\gamma$-good.

We construct the set $S_\gamma^{186}$ defined as follows:
a word $v$ is in $S_\gamma^{186}$ if and only if there exists a $\gamma$-good word $pvs$ such that $|p|=|v|=|s|=186$.
We check that $S_\gamma^{186}$ is exactly the set of factors of length 186 of $\gamma(\eta^\omega(\texttt{0}))$.
Thus, if $w$ is any bi-infinite $\gamma$-good word, then its factors of length 186 are also factors of $\gamma(\eta^\omega(\texttt{0}))$.
Now, every element of $S_\gamma^{186}$ contains the factor $\gamma(\texttt{0})=\texttt{012021201210120102}$
and every element of $S_\gamma^{186}$ with prefix $\gamma(\texttt{0})$ has a prefix in
\[
\acc{\gamma(\texttt{010}),\gamma(\texttt{020}),\gamma(\texttt{030}),\gamma(\texttt{0130}),\gamma(\texttt{0320}),\gamma(\texttt{01320})}.
\]
So $w\in\{\gamma(\texttt{01}),\gamma(\texttt{02}),\gamma(\texttt{03}),\gamma(\texttt{013}),\gamma(\texttt{032}),\gamma(\texttt{0132})\}^\omega$, hence $w\in\acc{\gamma(\texttt{0}),\gamma(\texttt{1}),\gamma(\texttt{2}),\gamma(\texttt{3})}^\omega$.
Thus we have $w=\gamma(u)$ for some bi-infinite 4-ary word~$u$.
Notice that $u$ must be square-free since $w=\gamma(u)$ is square-free.
We also check that for every factor $f\in F_\eta$, the set $S_\gamma^{186}$ does not contain $\gamma(f)$.
Since $\max_{f\in F_\eta}|\gamma(f)|=186$, the pre-image $u$ of $w$ does not contain any factor in $F_\eta$.
So $u$ is $\eta$-good. By \Cref{hgood}, $u$ has the same factor set as $\eta^\omega(\texttt{0})$.
So $w$ has the same factor set as $\gamma(\eta^\omega(\texttt{0}))$.
\end{proof}


\begin{theorem}\label{41_22}{\ }
\begin{enumerate}[label=\normalfont(\alph*)]
\item $\gamma(\eta^\omega(\normaltt{0}))$ contains exactly 16 palindromes and is $\tfrac{41}{22}^+$-free.\label{it:16p}
\item Every bi-infinite ternary $\tfrac{52}{27}$-free word containing at most $16$ palindromes has the same factor set as either $\gamma(\eta^\omega(\normaltt{0}))$ or $\gamma(\eta^\omega(\normaltt{0}))^R$.\label{it:pos}
\item Every recurrent ternary $\tfrac{25}{13}$-free word containing at most $17$ palindromes has the same factor set as either $\gamma(\eta^\omega(\normaltt{0}))$ or $\gamma(\eta^\omega(\normaltt{0}))^R$.\label{it:neg}
\end{enumerate}
\end{theorem}
\begin{proof}
Let us prove \Cref{it:16p}.  The word
$\gamma(\eta^\omega(\texttt{0}))$ contains the palindromes $\varepsilon$, \texttt{0}, \texttt{1}, \texttt{2}, the six palindromes
of the form $bcb$, and the six palindromes of the form $abcba$. It is easy to check that it contains no other palindrome, i.e., no $cabcbac$.
To show that $\gamma(\eta^\omega(\texttt{0}))$ is $\tfrac{41}{22}^+$-free, we consider the morphic word
$g(h^\omega(\texttt{0}))$ defined by the following morphisms $h$ and $g$.
\begin{align*}
    h(\texttt{0}) &= \texttt{01213012} & g(\texttt{0}) &= \texttt{0102012}\\
    h(\texttt{1}) &= \texttt{31} & g(\texttt{1}) &= \texttt{0212}\\
    h(\texttt{2}) &= \texttt{01201312} & g(\texttt{2}) &= \texttt{0121012}\\
    h(\texttt{3}) &= \texttt{0121301312} & g(\texttt{3}) &= \texttt{01020121012}
\end{align*}
We check that $g(h^\omega(\texttt{0}))$ avoids $F_\gamma$. We will show in Section~\ref{sec:ce} that $g(h^\omega(\texttt{0}))$ is $\tfrac{41}{22}^+$-free.
Since $g(h^\omega(\texttt{0}))$ is square-free and avoids $F_\gamma$, we see that $g(h^\omega(\texttt{0}))$ is $\gamma$-good. So by Lemma~\ref{ggood}, the words
$g(h^\omega(\texttt{0}))$ and $\gamma(\eta^\omega(\texttt{0}))$ have the same factor set.\footnote{Notice that $g(h^\omega(\texttt{0}))$ is not bi-infinite,
as required by Lemma~\ref{ggood}. However, $g(h^\omega(\texttt{0}))$ is uniformly recurrent, which is somehow a stronger condition for our purpose.}
Thus, $\gamma(\eta^\omega(\texttt{0}))$ is also $\tfrac{41}{22}^+$-free.

Let us prove \Cref{it:pos}.
We say that a ternary word (finite or infinite) is 16-good if it is $\tfrac{52}{27}$-free and contains at most $16$ palindromes.
We construct the set $S_{16}^{36}$ defined as follows:
a word $v$ is in $S_{16}^{36}$ if and only if there exists a 16-good word $pvs$ such that $|p|=|v|=|s|=36$.
Then we construct the Rauzy graph $G_{16}^{36}$ based on $S_{16}^{36}$, that is, such that vertices correspond to factors of length 35 and arcs correspond to factors of length 36.
We notice that $G_{16}^{36}$ consists of two connected components that are images of each other with respect to reversal.
Moreover, the connected component containing the factor \texttt{0120} is equal to the Rauzy graph
of the factors of length 36 of $\gamma(\eta^\omega(\texttt{0}))$.
Let $w$ be a bi-infinite 16-good word belonging to this connected component, that is, $w$ contains \texttt{0120}.
Since $\max_{f\in F_\gamma}|f|=36$, we see that $w$ is $\gamma$-good. By Lemma~\ref{ggood}, we conclude that $w$ has the same factor set as $\gamma(\eta^\omega(\texttt{0}))$.
By symmetry, this proves \Cref{it:pos}.

Let us prove \Cref{it:neg}.
We say that a ternary word (finite or infinite) is 17-good if it is $\tfrac{25}{13}$-free and contains at most $17$ palindromes.
Let $w$ be a recurrent 17-good word.
We check by backtracking that no infinite 17-good word avoids $\texttt{010}$.
By symmetry, $w$ must contain the six palindromes of the form $bcb$ and thus $abcba$.
So $w$ contains the 16 palindromes of $\gamma(\eta^\omega(\texttt{0}))$.
Notice that $w$ cannot contain a palindrome of the form $abacaba$ since $abacaba$
cannot be extended into a ternary square-free word.
So, without loss of generality, we assume that the potential 17th palindrome of $w$ is \texttt{0120210}.
We say that a ternary word (finite or infinite) is 17-great if it is 17-good and every palindrome it contains is a factor of \texttt{0120210}, \texttt{01210}, \texttt{02120}, \texttt{10201}, \texttt{20102}, or \texttt{21012}.
Thus $w$ is 17-great. We construct the set $S_{17}^{36}$ defined as follows:
a word $v$ is in $S_{17}^{36}$ if and only if there exists a 17-great word $pvs$ such that $|p|=|v|=|s|=36$.
Then we construct the Rauzy graph $G_{17}^{36}$ based on $S_{17}^{36}$.
Since $w$ is recurrent, $w$ is in a strongly connected component of $G_{17}^{36}$.
We have checked that the graph induced by each strongly connected component of $G_{17}^{36}$
is isomorphic to $G_{16}^{36}$.
Similarly, this shows that $w$ has the same factor set as either
$\gamma(\eta^\omega(\texttt{0}))$ or $\gamma(\eta^\omega(\texttt{0}))^R$.
\end{proof}

\section{Critical exponent of \texorpdfstring{$g(h^{\omega}({\texttt 0}))$}{g(h omega(0))}}\label{sec:ce}
Let us recall the definition of the morphisms $h$ and $g$:
\begin{align*}
    h(\texttt{0}) &= \texttt{01213012} & g(\texttt{0}) &= \texttt{0102012}\\
    h(\texttt{1}) &= \texttt{31} & g(\texttt{1}) &= \texttt{0212}\\
    h(\texttt{2}) &= \texttt{01201312} & g(\texttt{2}) &= \texttt{0121012}\\
    h(\texttt{3}) &= \texttt{0121301312} & g(\texttt{3}) &= \texttt{01020121012}
\end{align*}
In order to show that the critical exponent of $g(h^{\omega}({\texttt 0}))$ equals $\frac{41}{22}$, we will describe bispecial factors of $g(h^{\omega}({\texttt 0}))$ and their shortest return words. 
We will then apply the following result.

\begin{theorem}[\cite{DDP21}, Theorem 3]\label{thm:FormulaForE}
Let $\mathbf u$ be a uniformly recurrent aperiodic infinite word.
Let $(w_n)_{n\in\mathbb N}$ be the sequence of all bispecial factors in $\mathbf u$ ordered by length.
For every $n \in \mathbb N$, let $r_n$ be the shortest return word to the bispecial factor $w_n$ in $\mathbf u$.
Then
$$
E(\mathbf u) = 1 + \sup\acc{\frac{|w_n|}{|r_n|} \ : \ n \in \mathbb N} .
$$
\end{theorem}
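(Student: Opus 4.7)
The plan is to establish the displayed identity by proving the two inequalities separately. On one side, each bispecial factor produces a concrete repetition in $\mathbf{u}$; on the other, every repetition in $\mathbf{u}$ can be ``uncovered'' to reveal a bispecial factor together with a return word of length at most the period.

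For the lower bound $E(\mathbf{u}) \ge 1 + \sup_n |w_n|/|r_n|$, I fix a bispecial factor $w$ and a shortest return word $r$ to $w$. By the definition of a return word, there is an occurrence of $w$ in $\mathbf{u}$ followed at distance exactly $|r|$ by another occurrence of $w$; the factor of $\mathbf{u}$ spanning both is exactly $rw$, of length $|r|+|w|$. Since $w$ appears as a prefix of $rw$ and again starting at position $|r|$, one checks that $(rw)_i = (rw)_{i+|r|}$ for $0 \le i < |w|$, so $rw$ has period $|r|$. Hence $rw = r^{1+|w|/|r|}$ is a factor of $\mathbf{u}$ with exponent $1 + |w|/|r|$, giving $E(\mathbf{u}) \ge 1 + |w|/|r|$. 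Taking the supremum over $n$ yields the desired inequality.

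For the upper bound $E(\mathbf{u}) \le 1 + \sup_n |w_n|/|r_n|$, I start from any factor $v^e$ of $\mathbf{u}$ with $e > 1$ and period $p = |v|$. I extend this occurrence of $v^e$ maximally to the left and to the right while preserving the period $p$; aperiodicity of $\mathbf{u}$ guarantees that this maximal extension $\tilde z$ is a finite factor of $\mathbf{u}$, and $|\tilde z|/p \ge e$. Let $w$ be the prefix of $\tilde z$ of length $|\tilde z|-p$, which by periodicity is also the suffix of $\tilde z$ of that length. Maximality of $\tilde z$ forces the letter immediately right of $\tilde z$ in $\mathbf{u}$ to differ from $\tilde z_{|\tilde z|-p}$, so $w$ has two distinct right extensions, and symmetrically two distinct left extensions, meaning $w$ is bispecial. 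Moreover, the prefix of $\tilde z$ of length $p$ is a return word to $w$ in $\mathbf{u}$, so the shortest return word $r$ to $w$ satisfies $|r| \le p$. Therefore
\[
e \;\le\; \frac{|\tilde z|}{p} \;=\; 1 + \frac{|w|}{p} \;\le\; 1 + \frac{|w|}{|r|} \;\le\; 1 + \sup_n \frac{|w_n|}{|r_n|},
\]
and taking the supremum over all repetitions in $\mathbf{u}$ concludes.

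The main delicate point is the use of aperiodicity to ensure that the maximal $p$-periodic extension $\tilde z$ is finite (otherwise $\mathbf{u}$ itself would be eventually periodic with period $p$), together with uniform recurrence, which ensures that each bispecial factor occurs infinitely often so that the shortest return word $r_n$ is well defined. A minor subtlety is that the occurrence of $w$ at position $|r|$ inside $\tilde z$ used to produce a return word of length $p$ might not be the \emph{next} occurrence of $w$ after the prefix one in $\mathbf{u}$, but this only means that the shortest return word is potentially even shorter, which strengthens rather than weakens the inequality. Finally, the statement remains correct when either side is $+\infty$, since both estimates are compared term-by-term before passing to the supremum.
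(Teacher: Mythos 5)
This statement is quoted from \cite{DDP21} and the paper gives no proof of its own, so there is nothing internal to compare against; your argument is essentially the standard proof of this formula (and the same double-inequality strategy used in the cited reference): bispecial factors with their shortest return words produce repetitions $rw=r^{1+|w|/|r|}$, and conversely every repetition, extended maximally while keeping its period, exposes a bispecial factor with a return word no longer than the period. Both directions are sound, including your observation that the occurrence of $w$ at distance $p$ need not be the next one, which only shortens the return word. The one loose end is the left-hand side of the maximal extension: $\mathbf u$ is a right-infinite word, so the maximal $p$-periodic extension $\tilde z$ of a given occurrence may abut position $0$, in which case there is no letter to its left and the left-speciality argument breaks for that occurrence. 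This is easily repaired: by uniform recurrence the factor $v^e$ occurs at arbitrarily large positions, and if the maximal $p$-periodic extensions of infinitely many of these occurrences reached position $0$, then $\mathbf u$ would have period $p$, contradicting aperiodicity; so one can choose an occurrence whose maximal extension has a letter on each side. With that remark added, the proof is complete and correct.
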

Since $h$ is a primitive morphism, its fixed point is uniformly recurrent. Hence, $g(h^{\omega}({\texttt 0}))$ is uniformly recurrent, too.
We will see in the sequel that $g(h^{\omega}({\texttt 0}))$ contains infinitely many bispecial factors, thus it is aperiodic.

We will proceed in two steps.  First we determine the bispecial factors and their shortest return words in the fixed point $h^{\omega}({\texttt 0})$, and then we do the same thing in the word $g(h^{\omega}({\texttt 0}))$. 

\subsection{Bispecial factors in \texorpdfstring{$h^{\omega}({\texttt 0})$}{h omega(0)}}
To describe the structure of all bispecial factors, we use a method of~\cite{Klouda2012}.
This method does not work directly with bispecial factors, but with bispecial triplets $((a,b), w, (c,d))$, where $(a, b)$ and $(c, d)$ are unordered pairs of left and right extensions (respectively) of $w$ with $a\not =b, c\not =d$, and both $awc$ and $bwd$ or both $awd$ and $bwc$ are factors.\footnote{In general, $a,b, c, d$ can be words longer than one, but in the case of the morphism $h$ it suffices to consider letters.}
Obviously, each bispecial factor is a middle element of a bispecial triplet. We say that the bispecial factor is \textit{associated} with the bispecial triplet. A~bispecial factor may be associated with more than one bispecial triplet if it has more than two left or two right extensions.

The method of Klouda says that there is a mapping that maps each bispecial triplet to another, and that all bispecial triplets are obtained by repetitive application of the mapping to a finite set of \textit{initial} bispecial triplets.

The mapping, called the $f$\textit{-image} by Klouda, is defined as follows: The $f$-image of a~bispecial triplet $((a,b), w, (c,d))$ is the bispecial triplet $((a',b'), u_1h(w)u_2, (c',d'))$, where $u_1$ is the longest common suffix of $h(a)$ and $h(b)$, $a'u_1$ is a suffix of $h(a)$, and $b'u_1$ is a suffix of $h(b)$, and similarly, $u_2$ is the longest common prefix of $h(c)$ and $h(d)$, $u_2c'$ is a prefix of $h(c)$, and $u_2d'$ is a prefix of $h(d)$. 

\cite{Klouda2012} proved that the set of initial bispecial triplets corresponds to the set of bispecial factors without a~synchronization point.
In our case, the initial bispecial factors are:
\[
\varepsilon, \tt 1, 3, 01, 12, 13, 31, 012, 1201.
\]
In order to get all bispecial factors we do not have to consider all initial bispecial triplets since some of them have the same $f$-image. 
\begin{example}
$(({\tt 0,1}), \varepsilon, ({\tt 1,2}))$ is a bispecial triplet as $\tt 01$ and $\tt 12$ are factors of $h^{\omega}({\texttt 0})$. 
Its $f$-image is the initial triplet $(({\tt 2,1}), \varepsilon, ({\tt 3,0}))$, hence to find all bispecial factors we do not have to take $(({\tt 0,1}), \varepsilon, ({\tt 1,2}))$ into account.
\end{example}
Let us provide a list of initial bispecial triplets (giving rise to all bispecial factors) together with their first four $f$-images.

\begin{enumerate}
\item $(({\tt 0,2}), \varepsilon, ({\tt 1,3}))$
    \begin{align*}
        f\text{-image:} \  & (({\tt 0,3}), {\tt 12}, ({\tt 0,3})) \\
        f^2\text{-image:} \ & (({\tt 0,3}), {\tt 12}h({\tt 12}){\tt 0121301}, ({\tt 2,3})) \\
        f^3\text{-image:} \ & (({\tt 0,3}), {\tt 12}h({\tt 12})h^2({\tt 12})h({\tt 0121301}){\tt 012}, ({\tt 0,1}))\\
        f^4\text{-image:} \ & (({\tt 0,3}), {\tt 12}h({\tt 12})h^2({\tt 12})h^3({\tt 12})h^2({\tt 0121301})h({\tt 012}), ({\tt 0,3}))
    \end{align*}
\item $(({\tt 1,2}), \varepsilon, ({\tt 2,3}))$
\begin{align*}
        f\text{-image:} \ & {\tt ((1,2), 012, (0,1))} \\
        f^2\text{-image:} \ & {\tt ((1,2)}, h({\tt 012}), {\tt (0,3))} \\
        f^3\text{-image:} \ & (({\tt 1,2}), h^2(\tt 012)0121301, (2,3))\\
        f^4\text{-image:} \ & (({\tt 1,2}), h^3({\tt 012})h(\tt 0121301)012, (0,1))
    \end{align*}
\item $(({\tt 1,2}), \varepsilon, (\tt 0,3))$
\begin{align*}
        f\text{-image:} \ &\tt  ((1,2), 0121301, (2,3)) \\
        f^2\text{-image:} \ & (({\tt 1,2}), h(\tt 0121301)012, (0,1)) \\
        f^3\text{-image:} \ & (({\tt 1,2}), h^2({\tt 0121301})h(\tt 012), (0,3))\\
        f^4\text{-image:} \ & (({\tt 1,2}), h^3({\tt 0121301})h^2({\tt 012})\tt 0121301, (2,3))
    \end{align*}
\item $(({\tt 2,3}), \varepsilon, \tt (0,3))$
\begin{align*}
        f\text{-image:} \ &\tt  ((2,3), 013120121301, (2,3)) \\
        f^2\text{-image:} \ & (({\tt 2,3}), {\tt 01312}h(\tt 013120121301)012, (0,1)) \\
        f^3\text{-image:} \ & (({\tt 2,3}), {\tt 01312}h({\tt 01312})h^2({\tt 013120121301})h(\tt 012), (0,3))\\
        f^4\text{-image:} \ & (({\tt 2,3}), {\tt 01312}h({\tt 01312})h^2({\tt 01312})h^3({\tt 013120121301})h^2({\tt 012})\tt 0121301, (2,3))
    \end{align*}
\item $(({\tt 2,3}), \varepsilon,\tt  (1,3))$
\begin{align*}
        f\text{-image:} \ & \tt ((2,3), 01312, (0,3)) \\
        f^2\text{-image:} \ & (({\tt 2,3}), {\tt 01312}h(\tt 01312)0121301, (2,3)) \\
        f^3\text{-image:} \ & (({\tt 2,3}), {\tt 01312}h({\tt 01312})h^2({\tt 01312})h(\tt 0121301)012, (0,1))\\
        f^4\text{-image:} \ & (({\tt 2,3}), {\tt 01312}h({\tt 01312})h^2({\tt 01312})h^3({\tt 01312})h^2({\tt 0121301})h(\tt 012), (0,3))
    \end{align*}
\item $(({\tt 0,3}), \tt 1, (0,2))$
\begin{align*}
        f\text{-image:} \ & (({\tt 0,3}), {\tt 12}h(\tt 1)012, (0,1)) \\
        f^2\text{-image:} \ & (({\tt 0,3}), {\tt 12}h({\tt 12})h^2({\tt 1})h(\tt 012), (0,3)) \\
        f^3\text{-image:} \ & (({\tt 0,3}), {\tt 12}h({\tt 12})h^2({\tt 12})h^3({\tt 1})h^2({\tt 012})\tt 0121301, (2,3))\\
        f^4\text{-image:} \ & (({\tt 0,3}), {\tt 12}h({\tt 12})h^2({\tt 12})h^3({\tt 12})h^4({\tt 1})h^3({\tt 012})h(\tt 0121301)012, (0,1))
    \end{align*}
\item $(({\tt 2,3}), \tt 1, (0,3))$
\begin{align*}
        f\text{-image:} \ & (({\tt 2,3}), {\tt 01312}h(\tt 1)0121301, (2,3)) \\
        f^2\text{-image:} \ & (({\tt 2,3}), {\tt 01312}h({\tt 01312})h^2({\tt 1})h(\tt 0121301)012, (0,1)) \\
        f^3\text{-image:} \ & (({\tt 2,3}), {\tt 01312}h({\tt 01312})h^2({\tt 01312})h^3({\tt 1})h^2({\tt 0121301})h(\tt 012), (0,3))\\
        f^4\text{-image:} \ & (({\tt 2,3}), {\tt 01312}h({\tt 01312})h^2({\tt 01312})h^3({\tt 01312})h^4({\tt 1})h^3({\tt 0121301})h^2({\tt 012})\tt 0121301, (2,3))
    \end{align*}
\item $\tt ((0,3), 1, (0,3))$
\begin{align*}
        f\text{-image:} \ & (({\tt 0,3}), {\tt 12}h(\tt 1)0121301, (2,3)) \\
        f^2\text{-image:} \ & (({\tt 0,3}), {\tt 12}h({\tt 12})h^2({\tt 1})h(\tt 0121301)012, (0,1)) \\
        f^3\text{-image:} \ & (({\tt 0,3}), {\tt 12}h({\tt 12})h^2({\tt 12})h^3({\tt 1})h^2({\tt 0121301})h(\tt 012), (0,3))\\
        f^4\text{-image:} \ & (({\tt 0,3}), {\tt 12}h({\tt 12})h^2({\tt 12})h^3({\tt 12})h^4({\tt 1})h^3({\tt 0121301})h^2({\tt 012})\tt 0121301, (2,3))
    \end{align*}
\item $\tt ((2,3), 1, (2,3))$
\begin{align*}
        f\text{-image:} \ & (({\tt 2,3}), {\tt 01312}h(\tt 1)012, (0,1)) \\
        f^2\text{-image:} \ & (({\tt 2,3}), {\tt 01312}h({\tt 01312})h^2({\tt 1})h(\tt 012), (0,3)) \\
        f^3\text{-image:} \ & (({\tt 2,3}), {\tt 01312}h({\tt 01312})h^2({\tt 01312})h^3({\tt 1})h^2({\tt 012})\tt 0121301, (2,3))\\
        f^4\text{-image:} \ & (({\tt 2,3}), {\tt 01312}h({\tt 01312})h^2({\tt 01312})h^3({\tt 01312})h^4({\tt 1})h^3({\tt 012})h({\tt 0121301})\tt 012, (0,1))
    \end{align*}
\item $\tt ((1,2), 3, (0,1))$
\begin{align*}
        f\text{-image:} \ & (({\tt 1,2}), h(\tt 3), (0,3)) \\
        f^2\text{-image:} \ & (({\tt 1,2}), h^2(\tt 3)0121301, (2,3)) \\
        f^3\text{-image:} \ & (({\tt 1,2}), h^3({\tt 3})h(\tt 0121301)012, (0,1))\\
        f^4\text{-image:} \ & (({\tt 1,2}), h^4({\tt 3})h^2({\tt 0121301})h(\tt 012), (0,3))
    \end{align*}
\item $\tt ((2,3), 01, (2,3))$
\begin{align*}
        f\text{-image:} \ & (({\tt 2,3}), {\tt 01312}h(\tt 01)012, (0,1)) \\
        f^2\text{-image:} \ & (({\tt 2,3}), {\tt 01312}h({\tt 01312})h^2({\tt 01})h(\tt 012), (0,3)) \\
        f^3\text{-image:} \ & (({\tt 2,3}), {\tt 01312}h({\tt 01312})h^2({\tt 01312})h^3({\tt 01})h^2({\tt 012})\tt 0121301, (2,3))\\
        f^4\text{-image:} \ & (({\tt 2,3}), {\tt 01312}h({\tt 01312})h^2({\tt 01312})h^3({\tt 01312})h^4({\tt 01})h^3({\tt 012})h(\tt 0121301)012, (0,1))
    \end{align*}
\item $\tt ((1,2), 01, (2,3))$
\begin{align*}
        f\text{-image:} \ & (({\tt 1,2}), h(\tt 01)012, (0,1)) \\
        f^2\text{-image:} \ & (({\tt 1,2}), h^2({\tt 01})h(\tt 012), (0,3)) \\
        f^3\text{-image:} \ & (({\tt 1,2}), h^3({\tt 01})h^2(\tt 012)0121301, (2,3))\\
        f^4\text{-image:} \ & (({\tt 1,2}), h^4({\tt 01})h^3({\tt 012})h(\tt 0121301)012, (0,1))
    \end{align*}
\item $\tt ((0,3), 12, (0,1))$
\begin{align*}
        f\text{-image:} \ & (({\tt 0,3}), {\tt 12}h(\tt 12), (0,3)) \\
        f^2\text{-image:} \ & (({\tt 0,3}), {\tt 12}h({\tt 12})h^2(\tt 12)0121301, (2,3)) \\
        f^3\text{-image:} \ & (({\tt 0,3}), {\tt 12}h({\tt 12})h^2({\tt 12})h^3({\tt 12})h(\tt 0121301)012, (0,1))\\
        f^4\text{-image:} \ & (({\tt 0,3}), {\tt 12}h({\tt 12})h^2({\tt 12})h^3({\tt 12})h^4({\tt 12})h^2({\tt 0121301})h(\tt 012), (0,3))
    \end{align*}
\item $\tt ((0,2), 13, (0,1))$
\begin{align*}
        f\text{-image:} \ & (({\tt 0,3}), {\tt 12}h(\tt 13), (0,3)) \\
        f^2\text{-image:} \ & (({\tt 0,3}), {\tt 12}h({\tt 12})h^2(\tt 13)0121301, (2,3)) \\
        f^3\text{-image:} \ & (({\tt 0,3}), {\tt 12}h({\tt 12})h^2({\tt 12})h^3({\tt 13})h(\tt 0121301)012, (0,1))\\
        f^4\text{-image:} \ & (({\tt 0,3}), {\tt 12}h({\tt 12})h^2({\tt 12})h^3({\tt 12})h^4({\tt 13})h^2({\tt 0121301})h(\tt 012), (0,3))
    \end{align*}
\item $\tt ((1,2), 31, (0,2))$
\begin{align*}
        f\text{-image:} \ & (({\tt 1,2}), h(\tt 31)012, (0,1)) \\
        f^2\text{-image:} \ & (({\tt 1,2}), h^2({\tt 31})h(\tt 012), (0,3)) \\
        f^3\text{-image:} \ & (({\tt 1,2}), h^3({\tt 31})h^2(\tt 012)0121301, (2,3))\\
        f^4\text{-image:} \ & (({\tt 1,2}), h^4({\tt 31})h^3({\tt 012})h(\tt 0121301)012, (0,1))
    \end{align*}
\item $\tt ((2,3), 012, (1,3))$
\begin{align*}
        f\text{-image:} \ & (({\tt 2,3}), {\tt 01312}h(\tt 012), (0,3)) \\
        f^2\text{-image:} \ & (({\tt 2,3}), {\tt 01312}h({\tt 01312})h^2(\tt 012)0121301, (2,3)) \\
        f^3\text{-image:} \ & (({\tt 2,3}), {\tt 01312}h({\tt 01312})h^2({\tt 01312})h^3({\tt 012})h(\tt 0121301)012, (0,1))\\
        f^4\text{-image:} \ & (({\tt 2,3}), {\tt 01312}h({\tt 01312})h^2({\tt 01312})h^3({\tt 01312})h^4({\tt 012})h^2({\tt 0121301})h(\tt 012), (0,3))
    \end{align*}
\item $\tt ((1,3), 012, (0,3))$
\begin{align*}
        f\text{-image:} \ & (({\tt 1,2}), h(\tt 012)0121301, (2,3)) \\
        f^2\text{-image:} \ & (({\tt 1,2}), h^2({\tt 012})h(\tt 0121301)012, (0,1)) \\
        f^3\text{-image:} \ & (({\tt 1,2}), h^3({\tt 012})h^2({\tt 0121301})h(\tt 012), (0,3))\\
        f^4\text{-image:} \ & (({\tt 1,2}),h^4({\tt 012})h^3({\tt 0121301})h^2({\tt 012})\tt 0121301, (2,3))
    \end{align*}
\item $\tt ((0,3), 1201, (2,3))$
\begin{align*}
        f\text{-image:} \ & (({\tt 0,3}), {\tt 12}h(\tt 1201)012, (0,1)) \\
        f^2\text{-image:} \ & (({\tt 0,3}), {\tt 12}h({\tt 12})h^2({\tt 1201})h(\tt 012), (0,3)) \\
        f^3\text{-image:} \ & (({\tt 0,3}),  {\tt 12}h({\tt 12})h^2({\tt 12})h^3({\tt 1201})h^2(\tt 012)0121301, (2,3))\\
        f^4\text{-image:} \ & (({\tt 0,3}), {\tt 12}h({\tt 12})h^2({\tt 12})h^3({\tt 12})h^4({\tt 1201})h^3({\tt 012})h(\tt 0121301)012, (0,1))
    \end{align*}
\end{enumerate}
Observing Items 1 to 18, we can see that the $f^{n}$-image and $f^{n+3}$-image of any given initial bispecial triplet have the same left and right extensions. Therefore, we may write down recurrence relations and their explicit solutions for the Parikh vectors of bispecial factors $w_{\ell+3n}$ associated with the $f^{\ell+3n}$-image of each initial bispecial triplet, where $\ell \in \{1,2,3\}$. For this purpose, we will let $M$ denote the matrix of the morphism $h$, i.e., 
$$M=\left(\begin{array}{rrrr} 
2&0&2&2\\
3&1&3&4\\
2&0&2&2\\
1&1&1&2
\end{array}\right)\,.$$

We will limit our considerations to the cases $1$ and $15$; the other cases are analogous and are left to the reader.
\begin{description}
\item[Case 1]  We have $\vec w_1=\left(\begin{smallmatrix} 
0\\1\\1\\0
\end{smallmatrix}\right)$ and the recurrence relations read, for each $n \in \mathbb N$,
\begin{align*}
\vec w_{4+3n}&=\left(\begin{smallmatrix} 
0\\1\\1\\0
\end{smallmatrix}\right)+M\left(\begin{smallmatrix} 
0\\1\\1\\0
\end{smallmatrix}\right)+M^2\left(\begin{smallmatrix} 
0\\1\\1\\0
\end{smallmatrix}\right)+M^3\vec w_{1+3n}+M^2\left(\begin{smallmatrix} 
2\\3\\1\\1
\end{smallmatrix}\right)+M\left(\begin{smallmatrix} 
1\\1\\1\\0
\end{smallmatrix}\right)\,,\\
\vec w_{2+3n}&=\left(\begin{smallmatrix} 
0\\1\\1\\0
\end{smallmatrix}\right)+M\vec w_{1+3n}+\left(\begin{smallmatrix} 
2\\3\\1\\1
\end{smallmatrix}\right)\,,\\
\vec w_{3+3n}&=\left(\begin{smallmatrix} 
0\\1\\1\\0
\end{smallmatrix}\right)+M\vec w_{2+3n}+\left(\begin{smallmatrix} 
1\\1\\1\\0
\end{smallmatrix}\right)\,.
\end{align*}
The explicit solution, for each $n \in \mathbb N$, is given by
$$\vec w_{1+3n}= \sum_{j=0}^{3n}M^j\left(\begin{smallmatrix} 
0\\1\\1\\0
\end{smallmatrix}\right)+\sum_{j=0}^{n-1}M^{3j}\left( M^2\left(\begin{smallmatrix} 
2\\3\\1\\1
\end{smallmatrix}\right)+M\left(\begin{smallmatrix} 
1\\1\\1\\0
\end{smallmatrix}\right)\right)\,.$$

\item[Case 15] We have $\vec w_1=M\left(\begin{smallmatrix} 
0\\1\\0\\1
\end{smallmatrix}\right)+\left(\begin{smallmatrix} 
1\\1\\1\\0
\end{smallmatrix}\right)$ and the recurrence relations read, for each $n \in \mathbb N$,
\begin{align*}
\vec w_{4+3n}&= M^3\vec w_{1+3n}+M\left(\begin{smallmatrix} 
2\\3\\1\\1
\end{smallmatrix}\right)+\left(\begin{smallmatrix} 
1\\1\\1\\0
\end{smallmatrix}\right)\,,\\
\vec w_{2+3n}&=M\vec w_{1+3n}\,,\\
\vec w_{3+3n}&=M\vec w_{2+3n}+\left(\begin{smallmatrix} 
2\\3\\1\\1
\end{smallmatrix}\right)\,.
\end{align*}
The explicit solution, for each $n \in \mathbb N$, is given by
$$\vec w_{1+3n}= M^{3n+1}\left(\begin{smallmatrix} 
0\\1\\0\\1
\end{smallmatrix}\right)+\sum_{j=0}^{n}M^{3j}\left(\begin{smallmatrix} 
1\\1\\1\\0
\end{smallmatrix}\right)+\sum_{j=0}^{n-1} M^{3j+1}\left(\begin{smallmatrix} 
2\\3\\1\\1
\end{smallmatrix}\right)\,.$$
\end{description}
\subsection{Shortest return words to bispecial factors in \texorpdfstring{$h^{\omega}(\tt 0)$}{h omega(0)}}
To find the shortest return words to bispecial factors, we will use the following statement. For words $r$ and $s$, we write $\vec r\le \vec{s}$ if the $i$-th component of $\vec{r}$ is less than or equal to the $i$-th component of $\vec s$ for all $i$.
\begin{proposition}\label{prop:retwords}
If a bispecial factor $w$ in $h^{\omega}({\normaltt 0})$ is associated to a bispecial triplet 
\[
((a,b), u_1h(v)u_2, (c,d)),
\]
where 
\begin{itemize}
    \item $u_1$ is the longest common suffix of $h(a)$ and $h(b)$ and $u_2$ is the longest common prefix of $h(c)$ and $h(d)$,
    \item $h(v)$ has the synchronization points $\bullet h(v)\bullet$, and 
    \item $v$ has only two left and two right extensions,
\end{itemize} then the Parikh vectors of return words to $w$ are the same as the Parikh vectors of images by $h$ of return words to $v$.

In particular, if $r$ is a return word to $v$ such that $\vec r\le \vec s$ for any return word $s$ to $v$, then $\vv{h(r)}$ is the Parikh vector of the shortest return word to $w$.
\end{proposition}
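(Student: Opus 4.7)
The plan is to establish an order-preserving bijection between occurrences of $w$ in $h^\omega(\texttt{0})$ and occurrences of $v$, and then identify each return word to $w$ as a conjugate (and hence a Parikh-equivalent word) of the $h$-image of the corresponding return word to $v$.

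First, since $h^\omega(\texttt{0})$ is a fixed point of $h$, it serves simultaneously as image and preimage under $h$. The synchronization $\bullet h(v) \bullet$ implies that every occurrence of $h(v)$ as a factor of $h^\omega(\texttt{0})$ is the $h$-image of a unique occurrence of $v$ in the preimage, and conversely. Because $v$ has only two left extensions $a$ and $b$, and $u_1$ is by construction of the $f$-image the largest common suffix of $h(a)$ and $h(b)$, every occurrence of $h(v)$ is immediately preceded by exactly $u_1$; by the symmetric argument using the two right extensions $c,d$ and $u_2$, every occurrence is immediately followed by $u_2$. Hence the occurrences of $w = u_1 h(v) u_2$ are in order-preserving bijection with the occurrences of $v$.

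Next, fix consecutive occurrences of $v$ at positions $p$ and $p+|r|$ in $h^\omega(\texttt{0})$, where $r$ is the intervening return word, and set $q = |h(u_0 u_1 \cdots u_{p-1})|$. The corresponding consecutive occurrences of $h(v)$ sit at positions $q$ and $q+|h(r)|$, and of $w$ at $q-|u_1|$ and $q+|h(r)|-|u_1|$. Let $R$ be the return word to $w$ between these consecutive occurrences; it has length $|h(r)|$. Its first $|u_1|$ letters coincide with the prefix $u_1$ of the first copy of $w$, while the last $|u_1|$ letters of $h(r)$ coincide with the prefix $u_1$ of the second copy of $w$; the remaining middle block is common to both. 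Writing $h(r) = m u_1$, we obtain $R = u_1 m$, so $R$ and $h(r)$ are conjugate, yielding $\vec R = \vec{h(r)} = M \vec r$. This proves the first statement.

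For the second statement, the bijection shows that the Parikh vectors of return words to $w$ form exactly the set $\{ M \vec s : s \text{ is a return word to } v \}$. Since $M$ has non-negative entries, $\vec r \le \vec s$ componentwise implies $M \vec r \le M \vec s$ componentwise, so $M \vec r$ is componentwise minimal in this set; in particular its coordinate sum $|h(r)|$ is minimal, and $\vec{h(r)} = M \vec r$ is the Parikh vector of a shortest return word to $w$. The main delicate point is the conjugation identification, which relies crucially on synchronization (to match occurrences of $h(v)$ with occurrences of $v$ in the preimage) and on the two-left-two-right-extensions hypothesis (to force $u_1$ and $u_2$ to appear uniformly as the flanking contexts of every occurrence of $h(v)$).
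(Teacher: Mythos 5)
Your proof is correct and follows essentially the same route as the paper's: the two-extension hypothesis forces every occurrence of $h(v)$ to be flanked by $u_1$ and $u_2$, synchronization plus injectivity of $h$ pulls consecutive occurrences back to a return word $r$ of $v$, and the return word to $w$ is then the conjugate $u_1 m$ of $h(r)=mu_1$, hence Parikh-equivalent. The paper phrases this as $z=u_1h(r)u_1^{-1}$ rather than via an explicit positional bijection, but the content is identical, including the (stated as ``obvious'' in the paper) observation that minimality is preserved because $M$ is non-negative.
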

\begin{proof}
Let $z$ be a return word to $w=u_1h(v)u_2$, so that we have
$$zw=\underbrace{u_1h(v)u_2\cdots}_{z}u_1h(v)u_2.$$
Since $v$ has only two left and two right extensions, using the definition of bispecial triplet, $h(v)$ is always preceded by $u_1$ and followed by $u_2$, hence $h(v)$ occurs only twice in $u_1h(v)u_2\cdots u_1h(v)u_2$. 
Knowing the synchronization points $\bullet h(v)\bullet u_2\cdots u_1\bullet h(v)\bullet$ and by the injectivity of $h$, there exists a unique return word $r$ to $v$ such that $h(v)u_2\cdots u_1h(v)=h(r)h(v)$. Again, since $h(v)$ is always preceded by $u_1$ and followed by $u_2$, we get  $u_1h(r)h(v)u_2=u_1h(r)u_1^{-1}u_1h(v)u_2$, consequently, $z=u_1h(r)u_1^{-1}$ and $\vec z=\vv{h(r)}$.

The second statement is then obvious. 
\end{proof}

\begin{observation}
The words $h({\tt 0}), h({\tt 2}), h({\tt 3}), h(i{\tt 1}), h({\tt 1}i)$, where $i \in \{\tt 0,2,3\}$, have synchronization points $\bullet h({\tt 0})\bullet, \bullet h({\tt 2})\bullet, \bullet h({\tt 3})\bullet, \bullet h(i{\tt 1})\bullet, \bullet h({\tt 1}i)\bullet$.
\end{observation}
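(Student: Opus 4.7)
The plan is to verify each of the nine claimed synchronization points directly from the definition, aided by the rigid block structure of the morphism $h$. Recall that $\bullet h(u)\bullet$ being a synchronization point of $w=h(u)$ means: whenever $h(v)=pws$ for some factor $v$ of $\mathbf{u}=h^{\omega}(\texttt{0})$ and some $p,s\in\mathcal{L}(h(\mathbf{u}))$, there exist factorizations $v=v_{1}v_{2}=v_{1}'v_{2}'$ with $h(v_{1})=p$, $h(v_{2})=ws$, $h(v_{1}')=pw$, and $h(v_{2}')=s$. Equivalently, every occurrence of $w$ inside $h(v)$ must be exactly the $h$-image of a contiguous sub-factor of $v$.

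First I would record the key structural features of $h$: the images $h(\texttt{0})=\texttt{01213012}$, $h(\texttt{2})=\texttt{01201312}$, and $h(\texttt{3})=\texttt{0121301312}$ all begin with $\texttt{0}$ and end with $\texttt{2}$, whereas $h(\texttt{1})=\texttt{31}$ is the only image starting with $\texttt{3}$ or ending with $\texttt{1}$. A short check shows that $h$ is a code (all four images are distinct and Sardinas--Patterson terminates trivially). In particular, any occurrence of the letter $\texttt{3}$ in $h(\mathbf{u})$ sits at one of only a few positions relative to the block decomposition: at the start of an $h(\texttt{1})$-block, at offset $4$ inside an $h(\texttt{0})$- or $h(\texttt{3})$-block, or at offset $7$ inside an $h(\texttt{3})$-block. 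These constraints tightly restrict how the boundary of any candidate $w$ could sit inside $h(v)$.

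With these constraints in hand, the argument for each $w$ on the list reduces to a finite check. For each candidate $w$ I would enumerate every factor $v$ of $\mathbf{u}$ whose image $h(v)$ is long enough to contain $w$ --- since $|w|\le 12$ and $\max_{a}|h(a)|=10$, it suffices to consider at most three consecutive blocks --- compute $h(v)$, and verify that $w$ occurs in $h(v)$ only at positions whose endpoints coincide with block boundaries of the decomposition $v=a_{0}a_{1}\cdots a_{k}$. Any putatively misaligned occurrence would force either a $\texttt{3}$ into a relative position it never occupies, or demand that some $h(a)$ begin with a letter other than $\texttt{0}$ or $\texttt{3}$, neither of which is possible. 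The same argument treats the three targets $h(\texttt{0})$, $h(\texttt{2})$, $h(\texttt{3})$ and the six words $h(i\texttt{1})$, $h(\texttt{1}i)$ uniformly.

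The main obstacle is purely bookkeeping: nine words, each admitting a handful of candidate shifts, yields a few dozen cases to tabulate. No deeper idea is needed; the rigidity of $h$ (in particular the distinguishing letter $\texttt{3}$ together with the shortness and uniqueness of $h(\texttt{1})=\texttt{31}$ as the only $\texttt{3}$-initial image) pins down the alignment near every occurrence of $w$. In practice, the verification can be carried out by hand case by case, or more safely by generating the set of factors of $h^{\omega}(\texttt{0})$ up to length $|w|+2\max_{a}|h(a)|$ and checking the alignment claim directly on that finite list.
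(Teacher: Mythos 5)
The paper gives no proof of this Observation at all --- it is stated as a bare fact to be checked directly --- so your finite verification plan is exactly the kind of argument the authors are implicitly relying on, and it is the right one: reduce to a bounded alignment check using the block structure of $h$, then rule out every misaligned occurrence. Two details in your write-up need correction before the by-hand version of the check is sound. First, your inventory of where the letter $\texttt{3}$ can sit relative to the block decomposition is incomplete: $h(\texttt{2})=\texttt{01201312}$ also contains a $\texttt{3}$, at offset $5$, so the full list is ``start of an $h(\texttt{1})$-block, offset $4$ of $h(\texttt{0})$, offset $5$ of $h(\texttt{2})$, or offset $4$ or $7$ of $h(\texttt{3})$.'' The omitted case is not vacuous --- for instance, when excluding misaligned occurrences of $h(\texttt{0})=\texttt{01213012}$ one must consider aligning its $\texttt{3}$ with offset $5$ of an $h(\texttt{2})$-block and observe the resulting letter mismatch --- so a case analysis based on your list as written would skip required cases. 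Second, ``at most three consecutive blocks'' undercounts: since $h(\texttt{1})=\texttt{31}$ has length $2$, a factor of length $12$ such as $h(\texttt{31})\hspace{1pt}h(\texttt{1})^{-1}\hspace{1pt}h(\texttt{1})$-type windows can straddle four blocks (one trailing letter, then $h(\texttt{1})$, then a full length-$8$ block, then one leading letter), so the enumeration should run over factors $v$ of length at least $4$, or equivalently over the longer windows you mention in your final sentence. With those two repairs the argument goes through; the computer-check fallback you describe is unaffected and already suffices.
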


By Proposition~\ref{prop:retwords}, it suffices to find the shortest return words to the shortest bispecial factors having only two left and two right extensions, in each case 1 to 18. 
By the shortest return word we mean a return word $r$ satisfying $\vec r \le \vec s$ for any other return word. 
Here is the complete list:

\begin{enumerate}
\item ${\tt 12}h({\tt 12})h({\tt 01})\tt 01213013$ is the shortest return word to ${\tt 12}h(\tt 12)0121301$
\item $h(\tt 01231)$ is the shortest return word to $h(\tt 012)$
\item $\tt 0121301231$ is the shortest return word to $\tt 0121301$
\item $\tt 013120121301231012$ is the shortest return word to $\tt 013120121301$
\item $\tt 0131201213$ is the shortest return word to $\tt 01312$
\item ${\tt 12}h(\tt 1)012013$ is the shortest return word to ${\tt 12}h(\tt 1)012$
\item ${\tt 01312}h(\tt 1)0121301231012$ is the shortest return word to ${\tt 01312}h(\tt 1)0121301$
\item ${\tt 12}h(\tt 1)0121301231012013$ is the shortest return word to ${\tt 12}h(\tt 1)0121301$
\item ${\tt 01312}h(\tt 1)0120131201213$ is the shortest return word to ${\tt 01312}h(\tt 1)012$
\item $\tt 301$ and $\tt 312$ are the shortest return words to $\tt 3$
\item ${\tt 01312}h(\tt 01)012130131231012$ is the shortest return word to ${\tt 01312}h(\tt 01)012$
\item $h(\tt 01)01201312$ is the shortest return word to $h(\tt 01)012$
\item ${\tt 12}h(\tt 12)012130$ is the shortest return word to ${\tt 12}h(\tt 12)$
\item $\tt 130$ is the shortest return word to $\tt 13$
\item $\tt 312$ is the shortest return word to $\tt 31$
\item ${\tt 01312}h(\tt 012)3101213$ is the shortest return word to ${\tt 01312}h(\tt 012)$
\item $h(\tt 01231)$ is the shortest return word to $h(\tt 012)0121301$
\item $\tt 12013$ is the shortest return word to $\tt 1201$
\end{enumerate}
Applying Proposition~\ref{prop:retwords}, we get the Parikh vectors of the shortest return words to all sufficiently long bispecial factors, leaving only a few short bispecial factors to consider separately.
Let us list the shortest return words to all bispecial factors in cases $1$ and $15$.
\begin{description}
\item[Case 1:] $\vec r_2=M\left(\begin{smallmatrix}1\\2\\1\\1\end{smallmatrix}\right)$ is the Parikh vector of the shortest return word to $w_2={\tt 12}h(\tt 12)0121301$. By Proposition~\ref{prop:retwords}, $\vec r_n=M^{n-2}\vec r_2$ is the Parikh vector of the shortest return word to $w_n$ for $n\ge 3$.
\item[Case 15:] $r_0=\tt 312$ is the shortest return word to $w_0=\tt 31$. By Proposition~\ref{prop:retwords}, $\vec r_n=M^{n}\vec r_0$ is the Parikh vector of the shortest return word to $w_n$ for $n\ge 1$.
\end{description}

\subsection{Bispecial factors in \texorpdfstring{$g(h^{\omega}(\tt 0))$}{g(h omega(0))}} 
Let us describe how to get all sufficiently long bispecial factors in $g(h^{\omega}(\tt 0))$ using our knowledge of bispecial triplets in $h^{\omega}(\tt 0)$.
\begin{proposition}\label{prop:retwordS_g}
Let $w$ be a bispecial factor in $g(h^{\omega}({\tt 0}))$ such that it has at least two synchronization points. Then there exists a unique bispecial factor $v$ in $h^{\omega}({\tt 0})$ such that $w=xg(v)y$, where $x$ is the longest common suffix of $g(a)$ and $g(b)$ for some $a,b \in \{\tt 0,1,12,3\}$, $a\not =b$, and $y$ is the longest common prefix of $g(c)$ and $g(d)$ for some $c,d \in \{\tt 01,1,2,3\}$, $c\not =d$, and either $avc$ and $bvd$ or $avd$ and $bvc$ are factors of $h^{\omega}(\tt 0)$. 

Moreover, if $v$ has only two left and two right extensions, then the Parikh vectors of return words to $w$ are the same as the Parikh vectors of images by $g$ of return words to $v$.

In particular, if $r$ is a return word to $v$ such that $\vec r\le \vec s$ for any return word $s$ to $v$, then $\vv{g(r)}$ is the Parikh vector of the shortest return word to $w$.
\end{proposition}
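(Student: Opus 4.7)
The plan is to mirror the three-step structure of the proof of Proposition~\ref{prop:retwords}, adapted to the fact that $g$ is non-uniform; the boundary alphabets on each side must be enlarged to include one two-letter block because $g(\tt 2)$ is a suffix of $g(\tt 3)$ and $g(\tt 0)$ has a long common prefix with $g(\tt 3)$.

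First I would use the two synchronization points of $g$ lying inside $w$ to desubstitute it uniquely as $w=x\,g(v)\,y$, where $v$ is a factor of $h^\omega(\tt 0)$ and $x,y$ are respectively a proper suffix and a proper prefix of letter images of $g$. Each left extension of $w$ pulls back through $g$ to a boundary super-letter $a$ such that $x$ is a suffix of $g(a)$; similarly each right extension yields a super-letter $d$ with $y$ a prefix of $g(d)$. The reason $a,b$ must range over $\{\tt 0,1,12,3\}$ rather than over single letters is the combinatorial fact that in $h^\omega(\tt 0)$ every occurrence of $\tt 2$ is immediately preceded by $\tt 1$, as one verifies directly on $h(\tt 0),h(\tt 2),h(\tt 3)$: the choice $a=\tt 2$ would force $x=g(\tt 2)$, which is a full (non-proper) suffix of $g(\tt 3)$, but the block $\tt 12$ resolves this because $g(\tt 12)$ and $g(\tt 3)$ share only their last eight letters. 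A symmetric computation, using that every $\tt 0$ in $h^\omega(\tt 0)$ is immediately followed by $\tt 1$, yields the right-side alphabet $\{\tt 01,1,2,3\}$. With these choices $x$ and $y$ are forced to be precisely the longest common suffix of $g(a),g(b)$ and the longest common prefix of $g(c),g(d)$, and the middle factor $v$ inherits bispeciality from $w$ together with the extension relations $avc,bvd$ or $avd,bvc$. Uniqueness of the decomposition follows from the synchronization points and the injectivity of $g$.

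For the moreover part, assume $v$ has exactly two left and two right extensions. Given a return word $z$ to $w$, consider the stretch $wz=x\,g(v)\,y\cdots x\,g(v)\,y$. By the analysis above, every occurrence of $g(v)$ in $g(h^\omega(\tt 0))$ is preceded by $x$ and followed by $y$, so $g(v)$ appears in $wz$ exactly twice. Using a synchronization point $\bullet g(v)\bullet$ and the injectivity of $g$, the segment $g(v)\,y\cdots x\,g(v)$ equals $g(rv)$ for a unique return word $r$ to $v$ in $h^\omega(\tt 0)$, and reading Parikh vectors gives $\vec z=\vec{g(r)}$. Conversely, every return word $r$ to $v$ produces a return word to $w$ with Parikh vector $\vec{g(r)}$. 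The ``in particular'' statement is then immediate since $\vec r\le\vec s$ implies $M_g\vec r\le M_g\vec s$ componentwise.

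The main obstacle will be the bookkeeping in the first step: one must enumerate all common suffixes and prefixes among the four $g$-images to verify that the super-letter sets $\{\tt 0,1,12,3\}$ and $\{\tt 01,1,2,3\}$ are both correct (a decomposition exists) and exhaustive (no further super-letters are needed), so that every bispecial factor $w$ with at least two synchronization points falls into exactly one case of the proposition.
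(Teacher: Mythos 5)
Your proposal is correct and follows essentially the same route as the paper: desubstitute $w$ via its first and last synchronization points and the injectivity of $g$, enlarge the boundary alphabets to $\{\texttt{0},\texttt{1},\texttt{12},\texttt{3}\}$ and $\{\texttt{01},\texttt{1},\texttt{2},\texttt{3}\}$ because $g(\texttt{2})$ is a suffix of $g(\texttt{3})$ and $g(\texttt{0})$ is a prefix of $g(\texttt{3})$ (using that $\texttt{2}$ is always preceded by $\texttt{1}$ and $\texttt{0}$ always followed by $\texttt{1}$), and then rerun the return-word argument of Proposition~\ref{prop:retwords}. The paper's own proof is terser, simply declaring the last part analogous to Proposition~\ref{prop:retwords}, so your additional common-suffix/prefix bookkeeping is consistent with, not divergent from, its argument.
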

\begin{proof}
Let us highlight the first and the last synchronization point of $w$ by writing $w=x\bullet \cdots \bullet y$.  Then by injectivity of $g$, there exists a unique factor $v$ in $h^{\omega}(\tt 0)$ such that $w=xg(v)y$. Now, $v$ is bispecial, otherwise we have a contradiction with the choice of synchronization points.
Since $g(\tt 0)$ is a prefix of $g(\tt 3)$ and $g(\tt 2)$ is a suffix of $g(\tt 3)$, in order to determine $x$ and $y$ we need to consider $\tt 12$ instead of $\tt 2$ among the left extensions of $v$ (this is without loss of generality since $\tt 2$ is always preceded by $\tt 1$ in $h^{\omega}({\tt 0})$). Similarly, we have to consider $\tt 01$ instead of $\tt 0$ among the right extensions of $v$. The statement on how to get $x$ and $y$ is then straightforward.

The remaining part is analogous to the proof of Proposition~\ref{prop:retwords}.
\end{proof}

Using Proposition~\ref{prop:retwordS_g}, to get the Parikh vectors of all bispecial factors and their shortest return words in $g(h^{\omega}(\tt 0))$, we have to treat three cases:
\begin{enumerate}
\item[a)] Bispecial factors in $g(h^{\omega}(\tt 0))$ having at most one synchronization point.
\item[b)] Bispecial factors in $g(h^{\omega}(\tt 0))$ having at least two synchronization points arising from bispecial factors in $h^{\omega}(\tt 0)$ with more than two left or right extensions.
\item[c)] Bispecial factors in $g(h^{\omega}(\tt 0))$ having at least two synchronization points arising from bispecial factors in $h^{\omega}(\tt 0)$ with exactly two left and two right extensions.
\end{enumerate}

\noindent \textbf{Case a)} Table~\ref{table:a} contains the list of all bispecial factors in $g(h^{\omega}(\tt 0))$ having at most one synchronization point together with their shortest return words. (There are more of them in some cases, but we always list only one.)

\begin{table}[ht]
\centering
\begin{tabular}{l|l}
    \text{bispecial factor} & \text{shortest return word} \\
    \hline 
$\tt 0$ & $\tt 01$\\
$\tt 1$ & $\tt 12$\\
$\tt 2$ & $\tt 20$\\
$\tt 10$ & $\tt 10120$\\
$\tt 01$ & $\tt 012$\\
$\tt 02$ & $\tt 02012$\\
$\tt 20$ & $\tt 201$\\
$\tt 12$ & $\tt 120$\\
$\tt 21$ & $\tt 21201$\\
$\tt 201$ & $\tt 2010$\\
$\tt 012$ & $\tt 0121$\\
$\tt 120$ & $\tt 1202$\\
$\tt 2012$ & $\tt 2012021$\\
$\tt 0120$ & $\tt 0120102$\\
$\tt 1201$ & $\tt 1201210$\\
$\tt 201210120$ & $\tt 20121012010$\\
$\tt 120102012$ & $\tt 12010201210$
\end{tabular}
\caption{List of bispecial factors and their shortest return words in case a).}\label{table:a}
\end{table}

\noindent \textbf{Case b)} Table~\ref{table:b} contains the list of all bispecial factors in $g(h^{\omega}(\tt 0))$ arising from bispecial factors in $h^{\omega}(\tt 0)$ having more than two left or right extensions (i.e., arising from $\tt 1,01,12,012$), together with their shortest return words.  (There are more of them in some cases, but we always list only one.)

\begin{table}[ht]
\centering
\begin{tabular}{l|l}
    \text{bispecial factor} & \text{shortest return word} \\
    \hline 
${\tt 012}g(\tt 1)01$ & ${\tt 012}g(\tt 1)0102$\\
${\tt 012}g(\tt 1)0102012$ & ${\tt 012}g(\tt 1)010201210120102$\\
${\tt 20121012}g(\tt 1)01$ & ${\tt 20121012}g(\tt 1)0102012021$\\
${\tt 20121012}g(\tt 1)0102012$ & ${\tt 20121012}g(1)\tt 0102012021$\\
${\tt 12}g(\tt 01)01$ & ${\tt 12}g(\tt 01)01210$\\
${\tt 20121012}g(\tt 01)01$ & ${\tt 20121012}g(\tt 01)01210120212010$ \\
${\tt 012}g(\tt 12)0$ & ${\tt 012}g(\tt 12)0102$\\
${\tt 012}g(\tt 12)0102012$ & ${\tt 012}g(\tt 12)0102012021201020121$ \\
${\tt 12}g(\tt 012)0$ & ${\tt 12}g(\tt 012)0102012101202$\\
${\tt 20121012}g(\tt 012)0$ & ${\tt 20121012}g(\tt 012)0212010$ \\
${\tt 12}g(\tt 012)0102012$ & ${\tt 12}g(\tt 012)0102012101202$
\end{tabular}
\caption{List of bispecial factors and their shortest return words in case b).}\label{table:b}
\end{table}

\noindent \textbf{Case c)} We will list the Parikh vectors of bispecial factors and their shortest return words coming only from cases $1$ and $15$ in $h^{\omega}(\tt 0)$. Let us denote by $W_n$ the bispecial factor in $g(h^{\omega}(\tt 0))$ arising from $w_n$ in $h^{\omega}(\tt 0)$. Moreover, we will use the matrix $N$ of the morphism $g$, i.e., 
$$N=\left(\begin{array}{rrrr}
3&1&2&4\\
2&1&3&4\\
2&2&2&3
\end{array}\right)\,.$$
By Proposition~\ref{prop:retwordS_g}, in all cases, the shortest return word to $W_n$ has Parikh vector equal to $N\vv{r}_n$, where $r_n$ is the shortest return word to $w_n$ in $h^{\omega}(\tt 0)$.
\begin{description}
\item[Case 1] For all $n \in \mathbb N$, we have
\begin{align*}
W_{1+3n}&={\tt 012}g(w_{1+3n}){\tt 0102012} \ \text{ when } n \ge 1\,,\\
W_{2+3n}&={\tt 012}g(w_{2+3n}){\tt 01}\,, \text{ and} \\
W_{3+3n}&={\tt 012}g(w_{3+3n}){\tt 0}\,. \\
\end{align*}


\item[Case 15] For all $n \in \mathbb N$, we have
\begin{align*}
W_0&={\tt 12}g({\tt 31}){\tt 01}\,,\\
W_{1+3n}&={\tt 12}g(w_{1+3n}){\tt 0}\,,\\
W_{2+3n}&={\tt 12}g(w_{2+3n}){\tt 0102012}\,, \text{ and}\\
W_{3+3n}&={\tt 12}g(w_{3+3n}){\tt 01}\,. \\
\end{align*}
\end{description}

We have prepared everything to be able to compute the lengths of all bispecial factors in $g(h^{\omega}(\tt 0))$ together with the lengths of their shortest return words. Thus, using Theorem~\ref{thm:FormulaForE} we will be able to determine the critical exponent of $g(h^{\omega}(\tt 0))$.
\begin{theorem}\label{thm:E_g}
The critical exponent of the word $g(h^{\omega}(\tt 0))$ equals $\frac{41}{22}$.
\end{theorem}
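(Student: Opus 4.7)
The plan is to apply Theorem~\ref{thm:FormulaForE} to the uniformly recurrent aperiodic word $g(h^\omega(\texttt{0}))$: its critical exponent equals $1+\sup_n|W_n|/|R_n|$ as $W_n$ runs over all bispecial factors of $g(h^\omega(\texttt{0}))$ and $R_n$ denotes the shortest return word to $W_n$. Since the target value is $\tfrac{41}{22}$, it suffices to show $\sup_n|W_n|/|R_n|=\tfrac{19}{22}$. The bispecial factors of $g(h^\omega(\texttt{0}))$ have already been partitioned into the three groups (a), (b), (c) above, so I would treat each in turn.

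Cases (a) and (b) comprise only finitely many bispecial factors, listed explicitly in Tables~\ref{table:a} and~\ref{table:b}. For these I would simply tabulate the ratios $|W|/|R|$. A direct inspection shows that the extremal value in case (a) is $9/11$ (attained, e.g., by $W=\texttt{201210120}$) and the ratios in case (b) are likewise bounded by values strictly below $\tfrac{19}{22}$, so these cases contribute nothing to the supremum.

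The main content is case (c), which consists of eighteen infinite subfamilies, one for each bispecial class of $h^\omega(\texttt{0})$. For each subfamily the previous subsections give closed-form expressions for $\vec{w}_{\ell+3n}$ as finite sums of terms $M^{3j}v$, and Proposition~\ref{prop:retwordS_g} expresses the Parikh vector of $R_n$ as $N\vec{r}_n$ with $\vec{r}_n$ likewise controlled by powers of $M$. Passing to lengths via $|g(u)|=(7,4,7,11)\cdot\vec u$ and $|h(u)|=\mathbf{1}^TM\vec u$, both $|W_{\ell+3n}|$ and $|R_{\ell+3n}|$ become linear combinations of entries of vectors of the form $M^{3j}v$. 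Since $h$ is primitive, Perron--Frobenius yields $M^{3n}v\sim\lambda^{3n}(u\cdot v)\,e$, where $\lambda$ is the spectral radius of $M$ and $u$, $e$ are its normalized left and right Perron eigenvectors. Consequently each ratio $|W_{\ell+3n}|/|R_{\ell+3n}|$ converges as $n\to\infty$ to an explicit quantity determined by the initial Parikh vectors of the recurrences and by the eigendata of $M$.

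The main obstacle is the bookkeeping: one must evaluate the asymptotic ratio for all eighteen families and for each residue $\ell\in\{1,2,3\}$, verify monotonicity of $n\mapsto|W_{\ell+3n}|/|R_{\ell+3n}|$ within each family (so that either the limit or some specific small-$n$ value realizes the supremum along that family), and single out the unique extremal family whose limiting ratio equals $\tfrac{19}{22}$. Combined with the finite verifications in cases (a) and (b) this yields $\sup_n|W_n|/|R_n|=\tfrac{19}{22}$ and therefore $E(g(h^\omega(\texttt{0})))=\tfrac{41}{22}$, as claimed.
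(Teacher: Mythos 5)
Your overall framework is the paper's own: apply Theorem~\ref{thm:FormulaForE}, split the bispecial factors into the classes (a), (b), (c), dispose of (a) and (b) by the tables, and handle the infinite families in (c) via the matrix recurrences and Proposition~\ref{prop:retwordS_g}. However, the proposal misidentifies where the supremum $\tfrac{19}{22}$ actually comes from, and this is a genuine gap rather than a cosmetic one. You assert that the case (b) ratios are ``strictly below $\tfrac{19}{22}$'' and ``contribute nothing to the supremum,'' and that the extremal value is realized as the \emph{limiting} ratio of one of the eighteen case (c) families. Both claims are false. The value $\tfrac{19}{22}$ is attained exactly by a factor in case (b), namely ${\tt 20121012}g({\tt 1}){\tt 010201202120121012}g({\tt 1}){\tt 0102012}=({\tt 2012101202120102012021})^{41/22}$, and also by the \emph{shortest} member $W_0$ of family 15 in case (c) (the ratio $|W_0|/|R_0|=\tfrac{19}{22}$ at $n=0$). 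By contrast, every infinite family in case (c) has limiting ratio strictly less than $\tfrac{19}{22}$ (for families 1 and 15 the limits are $\tfrac{688}{1075}$ and $\tfrac{817}{1075}$, both well below $\tfrac{19}{22}\approx 0.864$). So the step ``single out the unique extremal family whose limiting ratio equals $\tfrac{19}{22}$'' would fail: no such family exists, and if you only compared limits you would conclude the supremum is too small (or be unable to locate it at all). A careful execution of your own tabulation step would eventually surface the two extremal finite factors, but as written the argument does not close.

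A secondary weakness: replacing the exact diagonalization of $M$ (eigenvalues $6,1,0,0$, giving closed forms with an explicitly decaying correction term) by Perron--Frobenius asymptotics plus an unproved claim of monotonicity of $n\mapsto|W_{\ell+3n}|/|R_{\ell+3n}|$ leaves the uniform bound for all $n$ unjustified. The paper bounds each ratio by an explicit constant plus a term of the form $c/6^{3n}$, which is what actually certifies $|W_n|/|R_n|\le\tfrac{19}{22}$ for every $n$, not just in the limit. You would need to either prove the monotonicity you invoke or, as the paper does, compute the exact expressions.
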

\begin{proof}
According to Theorem~\ref{thm:FormulaForE}, we have to show that for each bispecial factor $W$ and the shortest return word $R$ to $W$ in $g(h^{\omega}(\tt 0))$, the ratio $\frac{|W|}{|R|}$ is smaller than or equal to $\frac{19}{22}$. We have divided bispecial factors into three classes corresponding to the cases a), b), and c) described above.
\begin{enumerate}
\item[a)] For all bispecial factors in the first class, the ratio $\frac{|W|}{|R|}$ is at most $\frac{9}{11}<\frac{19}{22}$, see Table~\ref{table:a}.
\item[b)] For all bispecial factors in the second class, the ratio $\frac{|W|}{|R|}$ is at most $\frac{19}{22}$, see Table~\ref{table:b}. Moreover, the exponent $\frac{41}{22}$ is reached by the factor $${\tt 20121012}g({\tt 1}){\tt 010201202120121012}g({\tt 1}){\tt 0102012}=({\tt 2012101202120102012021})^{\frac{41}{22}}\,.$$
\item[c)] In this last class, we will treat only the bispecial factors described in cases $1$ and $15$.
To compute the lengths of bispecial factors and their shortest return words, we diagonalize the matrix $M$. Using standard linear algebra, we obtain $M=XDX^{-1}$, where 
\[
\arraycolsep=4pt
D=\left(\begin{array}{rrrr}
6&0&0&0\\
0&1&0&0\\
0&0&0&0\\
0&0&0&0
\end{array}\right), \ X=\left(\begin{array}{rrrr}
1&2&1&1\\
2&-1&1&0\\
1&2&0&-1\\
1&-3&-1&0
\end{array}\right), \  X^{-1}=\frac{1}{15}\left(\begin{array}{rrrr}
4&1&4&5\\
3&-3&3&0\\
-5&10&-5&-10\\
10&-5&-5&5
\end{array}\right).
\]
We will repeatedly use the formula
$$(1,1,1)NM^j=(1,1,1)NXD^jX^{-1}=\frac{1}{5}(44\cdot 6^j-9, 11\cdot 6^j+9, 44\cdot 6^j-9, 55\cdot 6^j)\,. $$

\noindent
\textbf{Case 1:} For all $n \in \mathbb N$ with $n\ge 1$, we have
\begin{align*}
\dfrac{|W_{1+3n}|}{|R_{1+3n}|}
&=\dfrac{(1,1,1)\left(\left(\begin{smallmatrix} 1\\1\\1
\end{smallmatrix}\right)+N\vv{w}_{1+3n}+\left(\begin{smallmatrix} 3\\2\\2
\end{smallmatrix}\right)\right)}{(1,1,1)NM^{3n-1}\vv{r}_2}\\
&=\dfrac{(1,1,1)\left(\left(\begin{smallmatrix} 4\\3\\3
\end{smallmatrix}\right)+N\left(\displaystyle\sum_{j=0}^{3n}M^j\left(\begin{smallmatrix} 0\\1\\1\\0
\end{smallmatrix}\right)+\sum_{j=0}^{n-1}M^{3j+2}\left(\begin{smallmatrix} 2\\3\\1\\1
\end{smallmatrix}\right)+\sum_{j=0}^{n-1}M^{3j+1}\left(\begin{smallmatrix} 1\\1\\1\\0
\end{smallmatrix}\right)\right)\right)}{(1,1,1)NM^{3n}\left(\begin{smallmatrix} 1\\2\\1\\1
\end{smallmatrix}\right)}\\
&=\dfrac{10+11\sum_{j=0}^{3n}6^j+44\sum_{j=0}^{n-1}6^{3j+2}+\frac{1}{5}\sum_{j=0}^{n-1}(99\cdot 6^{3j+1}-9)}{33 \cdot 6^{3n}}\\
&=\dfrac{10+\frac{11}{5}(6^{3n+1}-1)+\frac{44\cdot 36}{215}(6^{3n}-1)+\frac{99\cdot 6}{5\cdot 215}(6^{3n}-1)-\frac{9}{5}n}{33 \cdot 6^{3n}}\\
&<\dfrac{688}{1075}+\dfrac{10}{33\cdot 6^{3n}}<\dfrac{19}{22}\,. \\
\end{align*}
For all $n \in \mathbb N$, we have
\begin{align*}
\dfrac{|W_{2+3n}|}{|R_{2+3n}|}
&=\dfrac{(1,1,1)\left(\left(\begin{smallmatrix} 1\\1\\1
\end{smallmatrix}\right)+N\vv{w}_{2+3n}+\left(\begin{smallmatrix} 1\\1\\0
\end{smallmatrix}\right)\right)}{(1,1,1)NM^{3n}\vv r_2}\\
&=\dfrac{(1,1,1)\left(\left(\begin{smallmatrix} 2\\2\\1
\end{smallmatrix}\right)+N\left(\displaystyle\sum_{j=0}^{3n+1}M^j\left(\begin{smallmatrix} 0\\1\\1\\0
\end{smallmatrix}\right)+\sum_{j=0}^{n}M^{3j}\left(\begin{smallmatrix} 2\\3\\1\\1
\end{smallmatrix}\right)+\sum_{j=0}^{n-1}M^{3j+2}\left(\begin{smallmatrix} 1\\1\\1\\0
\end{smallmatrix}\right)\right)\right)}{(1,1,1)NM^{3n+1}\left(\begin{smallmatrix} 1\\2\\1\\1
\end{smallmatrix}\right)}\\
&=\dfrac{5+11\sum_{j=0}^{3n+1}6^j+44\sum_{j=0}^{n}6^{3j}+\frac{1}{5}\sum_{j=0}^{n-1}(99\cdot 6^{3j+2}-9)}{33 \cdot 6^{3n+1}}\\
&=\dfrac{5+\frac{11}{5}(6^{3n+2}-1)+\frac{44}{215}(6^{3n+3}-1)+\frac{99\cdot 36}{5\cdot 215}(6^{3n}-1)-\frac{9}{5}n}{33 \cdot 6^{3n+1}}\\
&<\dfrac{688}{1075}+\dfrac{5}{33\cdot 6^{3n+1}}<\dfrac{19}{22}\,. \\
\end{align*}
For all $n \in \mathbb N$, we have
\begin{align*}
\dfrac{|W_{3+3n}|}{|R_{3+3n}|}
&=\dfrac{(1,1,1)\left(\left(\begin{smallmatrix} 1\\1\\1
\end{smallmatrix}\right)+N\vv w_{3+3n}+\left(\begin{smallmatrix} 1\\0\\0
\end{smallmatrix}\right)\right)}{(1,1,1)NM^{3n+1}\vv r_2}\\
&=\dfrac{(1,1,1)\left(\left(\begin{smallmatrix} 2\\1\\1
\end{smallmatrix}\right)+N\left(\displaystyle\sum_{j=0}^{3n+2}M^j\left(\begin{smallmatrix} 0\\1\\1\\0
\end{smallmatrix}\right)+\sum_{j=0}^{n}M^{3j+1}\left(\begin{smallmatrix} 2\\3\\1\\1
\end{smallmatrix}\right)+\sum_{j=0}^{n}M^{3j}\left(\begin{smallmatrix} 1\\1\\1\\0
\end{smallmatrix}\right)\right)\right)}{(1,1,1)NM^{3n+2}\left(\begin{smallmatrix} 1\\2\\1\\1
\end{smallmatrix}\right)}\\
&=\dfrac{4+11\sum_{j=0}^{3n+2}6^j+44\sum_{j=0}^{n}6^{3j+1}+\frac{1}{5}\sum_{j=0}^{n}(99\cdot 6^{3j}-9)}{33 \cdot 6^{3n+2}}\\
&=\dfrac{4+\frac{11}{5}(6^{3n+3}-1)+\frac{44\cdot 6}{215}(6^{3n+3}-1)+\frac{99}{5\cdot 215}(6^{3n+3}-1)-\frac{9}{5}(n+1)}{33 \cdot 6^{3n+2}}\\
&<\dfrac{688}{1075}+\dfrac{4}{33\cdot 6^{3n+2}}<\dfrac{19}{22}\,.
\end{align*}

\noindent
\textbf{Case 15:} Let us start by noting that $\dfrac{|W_0|}{|R_0|}=\dfrac{(1,1,1)\left(\left(\begin{smallmatrix} 0\\1\\1
\end{smallmatrix}\right)+N\left(\begin{smallmatrix} 0\\1\\0\\1
\end{smallmatrix}\right)+\left(\begin{smallmatrix} 1\\1\\0
\end{smallmatrix}\right)\right)}{(1,1,1)N\left(\begin{smallmatrix} 0\\1\\1\\1
\end{smallmatrix}\right)}=\dfrac{19}{22}$. Thus we have found another factor of exponent $\frac{41}{22}$:
$${\tt 12}g({\tt 31}){\tt 0121012}g({\tt 31}){\tt 01}=({\tt 1201020121012021201210})^{\frac{41}{22}}\,.$$
For all $n \in \mathbb N$, we have
\begin{align*}
\dfrac{|W_{1+3n}|}{|R_{1+3n}|}
&=\dfrac{(1,1,1)\left(\left(\begin{smallmatrix} 0\\1\\1
\end{smallmatrix}\right)+N\vv w_{1+3n}+\left(\begin{smallmatrix} 1\\0\\0
\end{smallmatrix}\right)\right)}{(1,1,1)NM^{3n+1}\vv r_0}\\
&=\dfrac{(1,1,1)\left(\left(\begin{smallmatrix} 1\\1\\1
\end{smallmatrix}\right)+N\left(M^{3n+1}\left(\begin{smallmatrix} 0\\1\\0\\1
\end{smallmatrix}\right)+\displaystyle\sum_{j=0}^{n}M^{3j}\left(\begin{smallmatrix} 1\\1\\1\\0
\end{smallmatrix}\right)+\sum_{j=0}^{n-1}M^{3j+1}\left(\begin{smallmatrix} 2\\3\\1\\1
\end{smallmatrix}\right)\right)\right)}{(1,1,1)NM^{3n+1}\left(\begin{smallmatrix} 0\\1\\1\\1
\end{smallmatrix}\right)}\\
&=\dfrac{3+\frac{66\cdot 6^{3n+1}+9}{5}+\frac{1}{5}\sum_{j=0}^{n}(99\cdot6^{3j}-9)+44\sum_{j=0}^{n-1}6^{3j+1}}{22 \cdot 6^{3n+1}}\\
&<\dfrac{817}{1075}+\dfrac{2}{55\cdot 6^{3n}}<\dfrac{19}{22}\,.
\end{align*}
For all $n \in \mathbb N$, we have
\begin{align*}
\dfrac{|W_{2+3n}|}{|R_{2+3n}|}
&=\dfrac{(1,1,1)\left(\left(\begin{smallmatrix} 0\\1\\1
\end{smallmatrix}\right)+N\vv w_{2+3n}+\left(\begin{smallmatrix} 3\\2\\2
\end{smallmatrix}\right)\right)}{(1,1,1)NM^{3n+2}\vv r_0}\\
&=\dfrac{(1,1,1)\left(\left(\begin{smallmatrix} 3\\3\\3
\end{smallmatrix}\right)+N\left(M^{3n+2}\left(\begin{smallmatrix} 0\\1\\0\\1
\end{smallmatrix}\right)+\displaystyle\sum_{j=0}^{n}M^{3j+1}\left(\begin{smallmatrix} 1\\1\\1\\0
\end{smallmatrix}\right)+\sum_{j=0}^{n-1}M^{3j+2}\left(\begin{smallmatrix} 2\\3\\1\\1
\end{smallmatrix}\right)\right)\right)}{(1,1,1)NM^{3n+2}\left(\begin{smallmatrix} 0\\1\\1\\1
\end{smallmatrix}\right)}\\
&=\dfrac{9+\frac{66\cdot 6^{3n+2}+9}{5}+\frac{1}{5}\sum_{j=0}^{n}(99\cdot6^{3j+1}-9)+44\sum_{j=0}^{n-1}6^{3j+2}}{22 \cdot 6^{3n+2}}\\
&<\dfrac{817}{1075}+\dfrac{9}{110\cdot 6^{3n+1}}<\dfrac{19}{22}\,. 
\end{align*}
For all $n \in \mathbb N$, we have
\begin{align*}
\dfrac{|W_{3+3n}|}{|R_{3+3n}|}
&=\dfrac{(1,1,1)\left(\left(\begin{smallmatrix} 0\\1\\1
\end{smallmatrix}\right)+N\vv w_{3+3n}+\left(\begin{smallmatrix} 1\\1\\0
\end{smallmatrix}\right)\right)}{(1,1,1)NM^{3n+3}\vv r_0}\\
&=\dfrac{(1,1,1)\left(\left(\begin{smallmatrix} 1\\2\\1
\end{smallmatrix}\right)+N\left(M^{3n+3}\left(\begin{smallmatrix} 0\\1\\0\\1
\end{smallmatrix}\right)+\displaystyle\sum_{j=0}^{n}M^{3j+2}\left(\begin{smallmatrix} 1\\1\\1\\0
\end{smallmatrix}\right)+\sum_{j=0}^{n}M^{3j}\left(\begin{smallmatrix} 2\\3\\1\\1
\end{smallmatrix}\right)\right)\right)}{(1,1,1)NM^{3n+3}\left(\begin{smallmatrix} 0\\1\\1\\1
\end{smallmatrix}\right)}\\
&=\dfrac{4+\frac{66\cdot 6^{3n+3}+9}{5}+\frac{1}{5}\sum_{j=0}^{n}(99\cdot6^{3j+2}-9)+44\sum_{j=0}^{n}6^{3j}}{22 \cdot 6^{3n+3}}\\
&<\dfrac{817}{1075}+\dfrac{29}{110\cdot 6^{3n+3}}<\dfrac{19}{22}\,. \qedhere
\end{align*}
\end{enumerate}
\end{proof}

\section{Overpals and letter patterns}\label{sec:related}
The notions of overpals and letter patterns defined in the preliminaries happen to be related to square-free words with few palindromes.
\begin{lemma}\label{equiv}
Let $w$ be a bi-infinite ternary square-free word. Then the following properties are equivalent:
\begin{enumerate}[label=\normalfont(\alph*)]
\item $w$ contains at most 16 palindromes.\label{eq_16}
\item $w$ avoids overpals.\label{eq_overpals}
\item $w$ avoids the letter pattern $abcacba$.\label{eq_abcacba}
\end{enumerate}
\end{lemma}

\begin{proof}
First, notice that since $w$ is square-free, the length of every non-empty palindrome in $w$ is odd. Also, $w$ contains the palindromic letter pattern $aba$ if and only if
$w$ contains $cabac$.

\Cref{eq_16} $\Longrightarrow$ \Cref{eq_overpals}:
A computer check shows that there are only finitely many ternary square-free words avoiding \texttt{010} and containing at most 16 palindromes.
So, if $w$ contains at most 16 palindromes, then $w$ must contain all 6 factors of the form $aba$.
This implies that $w$ contains the following 16 palindromes: $\varepsilon$, \texttt{0}, \texttt{1}, \texttt{2}, six palindromes of the form $aba$, and six palindromes of the form $abcba$.
Thus $w$ contains no other palindrome. In particular, $w$ avoids overpals,
since an overpal is a palindrome such that the first letter occurs at least three times.

\Cref{eq_overpals} $\Longrightarrow$ \Cref{eq_abcacba}:
The letter pattern $abcacba$ is an overpal $axax^Ra$ with $x=bc$.
So if $w$ avoids overpals, then $w$ avoids $abcacba$.

\Cref{eq_abcacba} $\Longrightarrow$ \Cref{eq_16}:
Notice that $w$ avoids $cbcacbc$, since $cbcacbc$ has no square-free extension.
If $w$ also avoids $abcacba$,
then the palindrome $bcacb$ cannot be extended to a larger palindrome.
Thus, $w$ contains no palindrome other than the 16 palindromes mentioned above.
\end{proof}

Using Lemma~\ref{equiv}, we obtain the analogs of Theorems~\ref{thm:exp_pairs}\ref{e16},~\ref{41_22}\ref{it:16p}, and~\ref{41_22}\ref{it:pos},
where ``containing (at most) 16 palindromes'' is replaced by ``avoiding overpals'' or ``avoiding the letter pattern $abcacba$''.
This proves Conjecture 17 of~\cite{ARS2017} that there exists an infinite ternary $\tfrac{41}{22}^+$-free word that avoids overpals.
This also complements results by~\cite{C2016} about words avoiding $abcacba$ with respect to the critical exponent and the factor complexity.

\medskip

\cite{P2016} considered the other letter patterns that are minimally avoidable by infinite ternary square-free words, namely $abaca$, $abcab$, and $abacbc$.
We give simpler proofs of her results as well as the minimal critical exponent
$\tfrac74^+$ for words avoiding $abacbc$.
Here, contrary to the case of $abcacba$, there is no critical exponent such that the factor complexity is polynomial.

\begin{theorem}\label{LPexp}
There exist exponentially many ternary words that are:
\begin{enumerate}[label=\normalfont(\alph*)]
\item $\tfrac{15}8^+$-free and avoid $abaca$.
\item $\tfrac{11}6^+$-free and avoid $abcab$.
\item $\tfrac74^+$-free and avoid $abacbc$.
\end{enumerate}
\end{theorem}

\begin{proof}
In order to verify that the proposed words below avoid the associated letter pattern $p$, note that we 
only need to check the factors up to length $|p|$.
\begin{enumerate}[label=(\alph*)]
\item Applying the following 72-uniform morphism to any ternary $\tfrac74^+$-free word gives a ternary $\tfrac{15}8^+$-free word avoiding $abaca$.
{\footnotesize
\begin{align*}
\texttt{0}&\rightarrow\texttt{010210120102120121020120210120102101202102012102120102101201021201210212} \\
\texttt{1}&\rightarrow\texttt{010210120210201202101201021012021020121021201210201202101201021201210212} \\
\texttt{2}&\rightarrow\texttt{010210120210201210212012102012021012010210120210201202101201021201210212} 
\end{align*}
}

\item Applying the following 73-uniform morphism to any ternary $\tfrac74^+$-free word gives a ternary $\tfrac{11}6^+$-free word avoiding $abcab$.
{\footnotesize
\begin{align*}
\texttt{0}&\rightarrow\texttt{0121020102120210120212010212021012102012101202120102120210120212010201210} \\
\texttt{1}&\rightarrow\texttt{1202101210201021201020121020102120210120212010201210201021201020121012021} \\
\texttt{2}&\rightarrow\texttt{2010212021012102012101202101210201021201020121012021012102012101202120102} 
\end{align*}
}

\item Applying the following 128-uniform morphism to any 4-ary $\tfrac75^+$-free word gives a ternary $\tfrac74^+$-free word avoiding $abacbc$.
{\footnotesize
\begin{align*}
\texttt{0}\rightarrow &\ p\texttt{120102101210212012101201020120210201021012102120210201202120121021202102}\\
&\ \texttt{010210121}\\
\texttt{1}\rightarrow &\ p\texttt{120102101210212021020102101201020120210201021012102120121012010210121021}\\
&\ \texttt{202102012}\\
\texttt{2}\rightarrow &\ p\texttt{120102101210212021020120212012102120210201021012010201202102010210121021}\\
&\ \texttt{202102012}\\
\texttt{3}\rightarrow &\ p\texttt{212021020102101210212012101201021012102120210201021012010201202120121021}\\
&\ \texttt{202102012} 
\end{align*}
}
where {\footnotesize$p=\texttt{02120121012010201202102010210120102012021201210}$}. \qedhere
\end{enumerate}
\end{proof}

\acknowledgements
We thank the anonymous referees for comments which helped to improve the paper.

\nocite{*}
\bibliographystyle{abbrvnat}
\bibliography{biblio-dmtcs}

@article{BO15,
title = {Characterization of some binary words with few squares},
journal = {Theoretical Computer Science},
volume = {588},
pages = {73-80},
year = {2015},
doi = {10.1016/j.tcs.2015.03.044},
url = {https://www.sciencedirect.com/science/article/pii/S0304397515002911},
author = {Golnaz Badkobeh and Pascal Ochem},
keywords = {Combinatorial problems, Repetitions, Avoidability}
}

@article{BaPeSt2008,
author = {L. Balková and E. Pelantová and W. Steiner}, 
title = {Sequences with constant number of return words}, 
journal = {Monatshefte für Mathematik},
volume = {155}, 
pages = {251--263},
year = {2008},
doi = {https://doi.org/10.1007/s00605-008-0001-2}
}

@article{Brandenburg1983,
title = {Uniformly growing $k$-th power-free homomorphisms},
journal = {Theoretical Computer Science},
volume = {23},
number = {1},
pages = {69-82},
year = {1988},
doi = {10.1016/0304-3975(88)90009-6},
url = {https://www.sciencedirect.com/science/article/pii/0304397588900096},
author = {Franz-Josef Brandenburg}
}

@article{Carpi2007,
title = {On {D}ejean’s conjecture over large alphabets},
journal = {Theoretical Computer Science},
volume = {385},
number = {1},
pages = {137-151},
year = {2007},
doi = {10.1016/j.tcs.2007.06.001},
url = {https://www.sciencedirect.com/science/article/pii/S0304397507004781},
author = {Arturo Carpi},
keywords = {Fractional repetition, Repetition threshold, Dejean conjecture}
}

@inproceedings{cassaignealgo,
  title={An Algorithm to Test if a Given Circular HDOL-Language Avoids a Pattern},
  author={Julien Cassaigne},
  booktitle={IFIP Congress},
  year={1994},
  url={https://api.semanticscholar.org/CorpusID:17440661}
}

@article{Cassaigne1997,
author = {J. Cassaigne},
title = {Complexité et facteurs spéciaux},
journal = {Bull. Belg. Math. Soc. Simon Stevin},
volume = {4(1)},
year = {1997}, 
pages = {67--88}
}

@article{C2016,
author = {J. D. Currie},
title = {A ternary square-free sequence avoiding factors equivalent to $abcacba$},
journal = {The Electronic Journal of Combinatorics},
volume = {23},
number = {2},
year = {2016}, 
pages = {\#P2.41}
}

@misc{Currie2025,
      title={Words with factor complexity $2n+1$ and minimal critical exponent}, 
      author={J. D. Currie},
      year={2026},
      eprint={2507.09387},
      archivePrefix={arXiv},
      primaryClass={math.CO},
      url={https://arxiv.org/abs/2507.09387}, 
}

@article{CDOORS2023, 
title={Complement Avoidance in Binary Words}, 
volume={32}, 
url={https://www.combinatorics.org/ojs/index.php/eljc/article/view/v32i4p24}, DOI={10.37236/11555}, 
number={4}, 
journal={The Electronic Journal of Combinatorics}, 
author={Currie, J. D. and Dvořáková, Lubomíra and Ochem, Pascal and Opočenská, Daniela and Rampersad, Narad and Shallit, Jeffrey}, 
year={2025}, 
pages={\#P4.24} 
}

@article{CurrieMolPeltomaki2025, 
title={The repetition threshold for ternary rich words}, 
volume={32}, 
number={2}, 
journal={The Electronic Journal of Combinatorics}, 
author={J. D. Currie and L. Mol and J. Peltom\"aki}, 
year={2025}, 
pages={\#P2.55}
}

@article{CurrieMolRampersad2020,
    title      = {The repetition threshold for binary rich words},
    author     = {J. D. Currie and Lucas Mol and Narad Rampersad},
    url        = {https://dmtcs.episciences.org/5791},
    doi        = {10.23638/DMTCS-22-1-6},
    journal    = {Discrete Mathematics \& Theoretical Computer Science},
    volume     = {vol. 22 no. 1},
    issuetitle = {Analysis of Algorithms},
    eid        = {\#6},
    year       = {2020}
   
}

@article{CORS2022,
	author = {{J. D. Currie} and {P. Ochem} and {N. Rampersad} and {J. Shallit}},
	title = {Properties of a ternary infinite word},
	DOI= "10.1051/ita/2022010",
	url= "https://doi.org/10.1051/ita/2022010",
	journal = {RAIRO-Theor. Inf. Appl.},
	year = 2023,
	volume = 57,
	pages = "1",
}

@article{CurrieRampersad2011,
    title      = {A proof of {D}ejean's conjecture},
    author     = {J. D. Currie and Narad Rampersad},
    journal    = {Mathematics of Computation},
    pages       = {1063--1070},
    volume     = {80},
    year       = {2011}
   
}

@article{Dejean1972,
title = {Sur un théorème de {T}hue},
journal = {Journal of Combinatorial Theory, Series A},
volume = {13},
number = {1},
pages = {90-99},
year = {1972},
doi = {10.1016/0097-3165(72)90011-8},
url = {https://www.sciencedirect.com/science/article/pii/0097316572900118},
author = {Françoise Dejean}
}

@article{DDP21,
title = {On balanced sequences and their critical exponent},
journal = {Theoretical Computer Science},
volume = {939},
pages = {18-47},
year = {2023},
doi = {10.1016/j.tcs.2022.10.014},
url = {https://www.sciencedirect.com/science/article/pii/S0304397522005941},
author = {Francesco Dolce and Lubomíra Dvořáková and Edita Pelantová}
}

@article{DOO2023, 
title={Critical Exponent of Binary Words with Few Distinct Palindromes},
volume={31}, 
url={https://www.combinatorics.org/ojs/index.php/eljc/article/view/v31i2p29}, DOI={10.37236/12574}, 
number={2}, 
journal={The Electronic Journal of Combinatorics}, 
author={Dvořáková, Lubomíra and Ochem, Pascal and Opočenská, Daniela}, 
year={2024}, 
pages={\#P2.29} 
}

@article{DvorakovaPelantova2024,
title = {The repetition threshold of episturmian sequences},
journal = {European Journal of Combinatorics},
volume = {120},
pages = {104001},
year = {2024},
doi = {10.1016/j.ejc.2024.104001},
url = {https://www.sciencedirect.com/science/article/pii/S0195669824000866},
author = {L’ubomíra Dvořáková and Edita Pelantová}
}

@article{DvorakovaPelantovaOpocenskaShur2022,
title = {On minimal critical exponent of balanced sequences},
journal = {Theoretical Computer Science},
volume = {922},
pages = {158-169},
year = {2022},
doi = {10.1016/j.tcs.2022.04.021},
url = {https://www.sciencedirect.com/science/article/pii/S0304397522002432},
author = {Lubomíra Dvořáková and Edita Pelantová and Daniela Opočenská and Arseny Shur}
}

@article{FZ2013,
title = {On the least number of palindromes contained in an infinite word},
journal = {Theoretical Computer Science},
volume = {481},
pages = {1-8},
year = {2013},
doi = {10.1016/j.tcs.2013.02.013},
url = {https://www.sciencedirect.com/science/article/pii/S0304397513001229},
author = {Gabriele Fici and Luca Q. Zamboni}
}

@article{FleischerShallit,
  author  = {Lukas Fleischer and Jeffrey Shallit},
  title   = {Automata, Palindromes, and Reversed Subwords},
  journal = {Journal of Automata, Languages and Combinatorics},
  volume  = {26},
  number  = {3--4},
  pages   = {221--253},
  year    = {2021},
  url     = {https://doi.org/10.25596/jalc-2021-221},
  doi     = {10.25596/jalc-2021-221}
}

@article{KarhumakiShallit2004,
title = {Polynomial versus exponential growth in repetition-free binary words},
journal = {Journal of Combinatorial Theory, Series A},
volume = {105},
number = {2},
pages = {335-347},
year = {2004},
doi = {https://doi.org/10.1016/j.jcta.2003.12.004},
url = {https://www.sciencedirect.com/science/article/pii/S0097316503001912},
author = {Juhani Karhumäki and Jeffrey Shallit}
}

@article{Klouda2012,
title = {Bispecial factors in circular non-pushy {D}0{L} languages},
journal = {Theoretical Computer Science},
volume = {445},
pages = {63-74},
year = {2012},
doi = {https://doi.org/10.1016/j.tcs.2012.05.007},
url = {https://www.sciencedirect.com/science/article/pii/S0304397512004288},
author = {Karel Klouda}
}

@article{Mol&Rampersad&Shallit:2020, 
title={Extremal Overlap-Free and Extremal $\beta$-Free Binary Words}, 
volume={27}, 
url={https://www.combinatorics.org/ojs/index.php/eljc/article/view/v27i4p42}, DOI={10.37236/9703}, 
number={4}, 
journal={The Electronic Journal of Combinatorics}, 
author={Mol, Lucas and Rampersad, Narad and Shallit, Jeffrey}, 
year={2020}, 
pages={\#P4.42} 
}

@article{Ochem2004,
	author = {{P. Ochem}},
	title = {A generator of morphisms for infinite words},
	DOI= "10.1051/ita:2006020",
	url= "https://doi.org/10.1051/ita:2006020",
	journal = {RAIRO-Theor. Inf. Appl.},
	year = 2006,
	volume = 40,
	number = 3,
	pages = "427-441",
	month = "",
}

@misc{OllingerShallit2025,
      title={The Repetition Threshold for {R}ote Sequences}, 
      author={Nicolas Ollinger and Jeffrey Shallit},
      year={2024},
      eprint={2406.17867},
      archivePrefix={arXiv},
      primaryClass={math.CO},
      url={https://arxiv.org/abs/2406.17867}, 
}

@article{P2016, 
title={Avoiding Letter Patterns in Ternary Square-Free Words}, 
volume={23}, 
url={https://www.combinatorics.org/ojs/index.php/eljc/article/view/v23i1p18}, DOI={10.37236/5424}, 
number={1}, 
journal={The Electronic Journal of Combinatorics}, 
author={Petrova, Elena A.}, 
year={2016},
pages={\#P1.18} 
}

@InProceedings{ARS2017,
author="Rajasekaran, Aayush
and Rampersad, Narad
and Shallit, Jeffrey",
editor="Brlek, Sre{\v{c}}ko
and Dolce, Francesco
and Reutenauer, Christophe
and Vandomme, {\'E}lise",
title="Overpals, Underlaps, and Underpals",
booktitle="Combinatorics on Words",
year="2017",
publisher="Springer International Publishing",
address="Cham",
pages="17--29"
}

@article{RampersadShallitVandomme2019,
title = {Critical exponents of infinite balanced words},
journal = {Theoretical Computer Science},
volume = {777},
pages = {454-463},
year = {2019},
doi = {10.1016/j.tcs.2018.10.017},
url = {https://www.sciencedirect.com/science/article/pii/S0304397518306388},
author = {Narad Rampersad and Jeffrey Shallit and {\'E}. Vandomme},
keywords = {Infinite word, Balanced word, Sturmian word, Critical exponent}
}

@article{Rao11,
title = {Last cases of {D}ejean’s conjecture},
journal = {Theoretical Computer Science},
volume = {412},
number = {27},
pages = {3010-3018},
year = {2011},
doi = {10.1016/j.tcs.2010.06.020},
url = {https://www.sciencedirect.com/science/article/pii/S0304397510003543},
author = {Michaël Rao}
}

@article{ShallitShur2019,
title = {Subword complexity and power avoidance},
journal = {Theoretical Computer Science},
volume = {792},
pages = {96-116},
year = {2019},
doi = {10.1016/j.tcs.2018.09.010},
url = {https://www.sciencedirect.com/science/article/pii/S0304397518305759},
author = {Jeffrey Shallit and Arseny Shur}
}

@article{Shur:2012,
title = {Growth properties of power-free languages},
journal = {Computer Science Review},
volume = {6},
number = {5},
pages = {187-208},
year = {2012},
doi = {10.1016/j.cosrev.2012.09.001},
url = {https://www.sciencedirect.com/science/article/pii/S1574013712000330},
author = {Arseny Shur}
}

@article{Thue06,
title = {\"{U}ber unendliche {Z}eichenreihen},
journal = {Norske Vid. Selsk. Skr. I. Mat. Nat. Kl. Christiania},
volume = {7},
pages = {1--22},
year = {1906},
author = {Axel Thue}
}

@article{Thue:1912,
title = {{\"U}ber die gegenseitige {L}age gleicher {T}eile gewisser {Z}eichenreihen},
journal = {Norske vid. Selsk. Skr. Mat. Nat. Kl.},
volume = {1},
pages = {1--67},
year = {1912},
author = {Axel Thue}
}
\label{sec:biblio}

\end{document}